\newtheorem{theorem}{theorem}[section]
\newtheorem{thm}[theorem]{Theorem}
\newtheorem{lem}[theorem]{Lemma}
\newtheorem{prop}[theorem]{Proposition}
\newtheorem{rmk}[theorem]{Remark}
\newtheorem{nota}[theorem]{Notation}
\begin{document}

\title{\textbf{Regular balanced Cayley maps on ${\rm PSL}(2,p)$}}
\author{\Large Haimiao Chen
\footnote{Email: \emph{chenhm@math.pku.edu.cn.\ This work is supported by NSFC-11401014.}} \\
\normalsize \em{Beijing Technology and Business University, Beijing, China}}
\date{}
\maketitle

\begin{abstract}
  A {\it regular balanced Cayley map} (RBCM for short) on a finite group $\Gamma$ is an embedding of a Cayley graph on $\Gamma$ into a surface, with some special symmetric property. People have classified RBCM's for cyclic, dihedral, generalized quaternion, dicyclic, and semi-dihedral groups. In this paper we classify RBCM's on the group ${\rm PSL}(2,p)$ for each prime number $p>3$.
\end{abstract}

\section{Introduction}

Let $\Gamma$ be a finite group and let $\Omega$ be a generating set not containing the identity and $\omega^{-1}\in\Omega$ whenever $\omega\in\Omega$.
The {\it Cayley graph}
${\rm Cay}(\Gamma,\Omega)$ is the graph having vertex set $V=\Gamma$ and arc set $\Gamma\times\Omega$, where $(\eta,\omega)$ means the arc from the vertex $\eta$ to $\eta\omega$. If $\rho$ is a cyclic permutation on $\Omega$, then it gives a cyclic order to the set of arcs starting from $\eta$ for each $\eta$, via $(\eta,\omega)\mapsto(\eta,\rho(\omega))$.
This determines a unique cellular embedding of the Cayley graph ${\rm Cay}(\Gamma,\Omega)$ into a closed oriented surface, where ``cellular" means that each connected component of the complement of the embedded graph is homeomorphic to a disk. Such an embedding is called a {\it Cayley map} and is denoted by $\mathcal{CM}(\Gamma,\Omega,\rho)$.

An \emph{isomorphism} between two Cayley maps is an isomorphism of the underlying graphs which is compatible with cyclic orders.

A Cayley map $\mathcal{CM}(\Gamma,\Omega,\rho)$ is called \emph{regular} if its automorphism group acts transitively on the arc set, and called \emph{balanced} if $\rho(\omega^{-1})=\rho(\omega)^{-1}$ for all $\omega\in \Omega$. From now on we abbreviate ``regular balanced Cayley map" to ``RBCM".
The following proposition collects some well-known facts, for which one may refer to \cite{RSJTW05,SS92,WF05}.
\begin{prop}  \label{prop:RBCM}
{\rm(a)} A Cayley map $\mathcal{CM}(\Gamma,\Omega,\rho)$ is a RBCM if and only if $\rho$ extends to an isomorphism of $\Gamma$.

{\rm (b)} For a RBCM $\mathcal{CM}(\Gamma,\Omega=\{\omega_{i}\colon 1\leq i\leq m\},\rho)$ with $\rho(\omega_{i})=\omega_{i+1}$, all of the elements of $\Omega$ have the same order, and
\begin{enumerate}
  \item[\rm(I)] either $m=2n$ and $\omega_{i+n}=\omega_{i}^{-1}$ for all $i$,
  \item[\rm (II)] or all the $\omega_{i}$'s are involutions, (i.e., have order 2).
\end{enumerate}
A RBCM in the case {\rm (I)} or {\rm (II)} is said to be of type {\rm I} or {\rm II}, and denoted by I-RBCM or II-RBCM, respectively.

{\rm (c)} If $\mathcal{CM}(\Gamma,\Omega,\rho)$ and $\mathcal{CM}(\Gamma',\Omega',\rho')$ are two RBCM's of the same type, then they are isomorphic if and only if there exists an isomorphism $f:\Gamma\to \Gamma'$ such that $f(\Omega)=\Omega'$ and $f\circ\rho=\rho'\circ f$.
\end{prop}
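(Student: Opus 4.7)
Throughout, I would exploit the natural vertex-regular action of $\Gamma$ on the Cayley map by left multiplication, under which an arc $(\eta,\omega)$ is moved by $\eta'$ to $(\eta'\eta,\omega)$. All three parts hinge on comparing this action to the rotational structure at the base vertex $1$.

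For part (a), the easy direction $(\Leftarrow)$ is to assume $\rho$ extends to $\phi\in\mathrm{Aut}(\Gamma)$ and define $\hat\phi(\eta,\omega):=(\phi(\eta),\phi(\omega))$; one checks that $\hat\phi$ is a map automorphism, that together with left translations it acts regularly on arcs, and that balancedness is immediate from $\phi(\omega^{-1})=\phi(\omega)^{-1}$. For the converse $(\Rightarrow)$, I would take the full map automorphism group $G$, which by regularity contains a unique element $\psi$ fixing $1$ and sending the arc $(1,\omega_1)$ to $(1,\omega_2)$; since $\psi$ preserves cyclic orders at $1$, it acts on $\Omega$ as $\rho$. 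The crux is to promote $\psi$ to a group automorphism of $\Gamma$: balancedness ensures that $\psi$ respects arc reversal, and a straightforward induction on word length in $\Omega$ then yields $\psi(\eta\omega)=\psi(\eta)\psi(\omega)$.

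For part (b), writing $\rho=\phi|_\Omega$ with $\phi\in\mathrm{Aut}(\Gamma)$ coming from part (a), all $\omega_i$ have the same order because $\phi$ preserves element orders. The inversion map $\iota\colon\omega\mapsto\omega^{-1}$ is a well-defined involution of $\Omega$, and balancedness reads $\rho\iota=\iota\rho$. Since $\rho$ is a full $m$-cycle on $\Omega$, its centralizer in the symmetric group on $\Omega$ is the cyclic group $\langle\rho\rangle$, so $\iota=\rho^k$ for some $k$ with $2k\equiv 0\pmod m$; the solutions $k=0$ and (when $m$ is even) $k=m/2$ give type II and type I, respectively.

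For part (c), an isomorphism of RBCM's is a graph isomorphism preserving cyclic orders; composing with a left translation in $\Gamma'$ one may assume it sends $1$ to $1'$. The same bootstrap argument used in part (a) then forces the map to be a group isomorphism $f\colon\Gamma\to\Gamma'$ with $f(\Omega)=\Omega'$ and $f\circ\rho=\rho'\circ f$, and the converse direction is straightforward. The chief technical obstacle across the whole proposition is the promotion in part (a) from ``preserves local cyclic order at $1$'' to ``is a group automorphism of $\Gamma$''; this is precisely the step in which the balancedness hypothesis is genuinely consumed, since without it the local rotation need not propagate consistently across the whole Cayley graph.
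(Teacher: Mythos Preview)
The paper does not actually prove this proposition: it is stated as a collection of ``well-known facts'' with references to \cite{RSJTW05,SS92,WF05}, and no argument is given. So there is no proof in the paper to compare your proposal against.

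That said, your outline is a faithful sketch of the standard argument found in those references. The key step you isolate in part~(a) --- showing that a map automorphism $\psi$ fixing the identity vertex acts on each arc $(\eta,\omega)$ as $(\psi(\eta),\rho(\omega))$, so that $\psi(\eta\omega)=\psi(\eta)\rho(\omega)$ and hence $\psi$ is a group automorphism --- is exactly where balancedness enters: following the reversed arc from $\eta\omega$ back and using $\rho(\omega^{-1})=\rho(\omega)^{-1}$ forces the ``local shift'' $c(\eta)$ in $\psi(\eta,\omega)=(\psi(\eta),\rho^{c(\eta)}(\omega))$ to be constant along edges, hence identically~$1$. Your centralizer argument for part~(b) is clean and correct, and part~(c) is indeed a straightforward transplant of the part~(a) propagation. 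If you were to submit this as a proof, the only place needing more explicit detail is the inductive propagation in~(a); everything else is routine.
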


A RBCM on a group $\Gamma$ is not only a combinatorial object with good symmetry property, but also can be considered as an extra structure on $\Gamma$.
So far, RBCM's have been classified for cyclic, dihedral, generalized quaternion, dicyclic, and semi-dihedral groups; see \cite{WF05, KO08, Oh09}. Recently, the first author \cite{Ch14} classified RBCM's for several subclasses of abelian $p$-groups.

In this paper, we classify RBCM's on ${\rm PSL}(2,p)$ with $p>3$ a prime number.
This is the first time to obtain concrete results for a family of simple groups.
Two key ingredients are involved in our idea:
(i) each automorphism of ${\rm PSL}(2,p)$ is the conjugation by a unique element of ${\rm PGL}(2,p)$, and
(ii) maximal subgroups of ${\rm PSL}(2,p)$ are known, as recalled in Proposition \ref{prop:max-subgrp}.
By Proposition \ref{prop:RBCM}, to classify $2n$-valent I-RBCM's on ${\rm PSL}(2,p)$, it suffices to find all pairs $(\sigma,\omega)$ with $\sigma\in{\rm PGL}(2,p)$ and $\omega\in{\rm PSL}(2,p)$ such that $\sigma$ has order $2n$, $\sigma^{n}\omega\sigma^{-n}=\omega^{-1}$ and $\sigma^{i}\omega\sigma^{-i}, i=1,\ldots,n$ generate ${\rm PSL}(2,p)$, (these conditions ensure that the conjugacy class of $\omega$ under $\sigma$ has size $2n$). Note that the last condition is equivalent to that the subgroup generated by $\sigma^{i}\omega\sigma^{-i}, i=1,\ldots,n$ is not contained in any maximal subgroup of ${\rm PSL}(2,p)$.
The RBCM's determined by two pairs $(\sigma,\omega)$ and $(\sigma',\omega')$ are isomorphic if and only if there exists $\tau\in{\rm PGL}(2,p)$ such that $\sigma'=\tau\sigma\tau^{-1}$ and $\omega'=\tau\omega\tau^{-1}$. The method for classifying II-RBCM's is similar.

The content is organized as follows. In Section 2 we recall some well-known facts about ${\rm PSL}(2,p)$ and ${\rm PGL}(2,p)$. In Section 3 and 4 we classify I-RBCM's and II-RBCM's on ${\rm PSL}(2,p)$, respectively; in each section we separately deal with the cases $p=5$ and $p>5$, because ${\rm PSL}(2,5)=A_{5}$ plays a special role in subgroup structure of ${\rm PSL}(2,p)$.

\begin{nota}
\rm For a RBCM $\mathcal{CM}({\rm PSL}(2,p),\Omega,\rho)$, if $\Omega=\{\sigma^{i}\omega\sigma^{-i}\colon 1\leq i\leq m\}$ with $\sigma\in{\rm PGL}(2,p)$, then  we denote $\mathcal{CM}({\rm PSL}(2,p),\Omega,\rho)$ by $\mathcal{CM}_{\omega}^{\sigma}$, with the understanding that the permutation $\rho$ is given by $\sigma^{i}\omega\sigma^{-i}\mapsto\sigma^{i+1}\omega\sigma^{-(i+1)}$.

For a set $X$, denote its cardinality by $\#X$.

For an element $\mu$ of a group $\Gamma$, denote its order by $|\mu|$. Given a set of elements $\mu_{1},\ldots,\mu_{\ell}\in \Gamma$, denote the subgroup they generate by $\langle \mu_{1},\ldots,\mu_{\ell}\rangle$.

For two permutations $\omega$ and $\psi$, use $\omega\psi$ to mean ``first do $\omega$, then do $\psi$". For instance, $(12)(23)=(132)$.

Denote the $2\times 2$ identity matrix by $\varepsilon$.
\end{nota}

\section{Preliminary}

For a finite field $\mathbb{F}$, let $\mathbb{F}^{\times}$ denote the multiplicative group of units.
Consider $\mathbb{F}_{p}$ as a subfield of $\mathbb{F}_{p^{2}}$. Fix a generator $e$ of the cyclic group $\mathbb{F}_{p}^{\times}$ and fix a square root $\sqrt{e}\in\mathbb{F}_{p^{2}}^{\times}$, then elements of $\mathbb{F}_{p^{2}}$ are linear combinations $a+b\sqrt{e}$ with $a,b\in\mathbb{F}_{p}$.
The {\it norm}
\begin{align*}
\mathcal{N}&:\mathbb{F}_{p^{2}}^{\times}\to\mathbb{F}_{p}^{\times}, \\
\mathcal{N}(a+b\sqrt{e})&=a^{2}-eb^{2}=(a+b\sqrt{e})^{p+1},
\end{align*}
is a surjective homomorphism (see Problem 1 on Page 87 of \cite{Mo96}).

Fix a generator $w_{1}+w_{2}\sqrt{e}$ of the cyclic group $\mathbb{F}_{p^{2}}^{\times}$ and let $w=w_{1}/w_{2}$, then $w^{2}-e$ has no square root in $\mathbb{F}_{p}$.

Let
\begin{align}
\alpha=\left(\begin{array}{cc} e & 0 \\ 0 & 1 \\ \end{array} \right), \ \ \ \
\beta=\left(\begin{array}{cc} 1 & 1 \\ 0 & 1 \\ \end{array} \right), \ \ \ \
\gamma=\left(\begin{array}{cc} w & e \\ 1 & w \\ \end{array} \right).
\end{align}
It is well-known that (one may refer to \cite{FH50}, Page 68) each element of ${\rm PGL}(2,p)$ is conjugate to $\alpha^{k}$ for some $k$, or $\beta$, or $\gamma^{\ell}$ for some $\ell$.
Furthermore, $\alpha^{k'}$ is conjugate to $\alpha^{k}$ if and only if $k'=\pm k$, and $\gamma^{\ell'}$ is conjugate to $\gamma^{\ell}$ if and only if $\ell'=\pm\ell$.

The following enables us to conveniently deal with the orders of elements of ${\rm PSL}(2,p)$, and can be proved by repeatedly applying
Hamilton-Cayley Theorem $\tilde{\eta}^{2}=t\tilde{\eta}-\varepsilon$.
\begin{prop} \label{prop:order}
Let $\tilde{\eta}\in{\rm SL}(2,p)$ with ${\rm tr}(\tilde{\eta})=t$, and let $\eta\in{\rm PSL}(2,p)$ denote the image of $\tilde{\eta}$ under the quotient homomorphism ${\rm SL}(2,p)\to{\rm PSL}(2,p)$.
\begin{enumerate}
  \item[\rm(a)] $|\eta|=2$ if and only if $t=0$,
  \item[\rm(b)] $|\eta|=3$ if and only if $t^{2}=1$,
  \item[\rm(c)] $|\eta|=4$ if and only if $t^{2}=2$,
  \item[\rm(d)] $|\eta|=5$ if and only if $(t^{2}-1)^{2}=t^{2}$.
\end{enumerate}
\end{prop}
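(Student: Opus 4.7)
The plan is to push the Hamilton--Cayley relation $\tilde{\eta}^{2}=t\tilde{\eta}-\varepsilon$ forward and read off when $\tilde{\eta}^{n}=\pm\varepsilon$. Since the kernel of the quotient ${\rm SL}(2,p)\to{\rm PSL}(2,p)$ is $\{\pm\varepsilon\}$, the condition $\eta^{n}=1$ is equivalent to $\tilde{\eta}^{n}=\pm\varepsilon$. I would write $\tilde{\eta}^{n}=a_{n}\tilde{\eta}+b_{n}\varepsilon$ and multiply by $\tilde{\eta}$, substituting the Hamilton--Cayley identity to obtain the recurrence $a_{n+1}=ta_{n}+b_{n}$ and $b_{n+1}=-a_{n}$, with initial values $a_{1}=1$, $b_{1}=0$. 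The crucial observation is that whenever $a_{n}=0$ the relation $\det(\tilde{\eta}^{n})=1$ forces $b_{n}^{2}=1$, so the matrix equation $\tilde{\eta}^{n}=\pm\varepsilon$ collapses to the single scalar condition $a_{n}(t)=0$ on the trace.

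Running the recurrence a few steps I would compute $a_{2}=t$, $a_{3}=t^{2}-1$, $a_{4}=t(t^{2}-2)$, and $a_{5}=t^{4}-3t^{2}+1=(t^{2}-1)^{2}-t^{2}$. Each yields the trace condition in the corresponding part. For (a), $a_{2}=0$ gives $t=0$. For (b) and (d), since $3$ and $5$ are prime, $|\eta|\in\{3,5\}$ is the same as $\eta^{n}=1$ together with $\eta\neq1$, so the stated polynomial equation is the whole content. For (c), $a_{4}=t(t^{2}-2)=0$ splits into the branch $t=0$, which gives order $2$ and must be discarded, and $t^{2}=2$, which is the genuine order-$4$ condition.

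The part I expect to require a little extra care, though it is not deep, is the bookkeeping around $t=\pm2$, i.e., the case in which $\tilde{\eta}$ has a repeated eigenvalue~$\pm1$. One must check that for each equation in (a)--(d), the value $t=\pm2$ either fails the equation or still forces $\eta\neq1$ for any lift satisfying it. For (a)--(c) the equations $t=0$, $t^{2}=1$, $t^{2}=2$ have no solution $t=\pm2$ when $p>3$. For (d) one computes $(t^{2}-1)^{2}-t^{2}=5$ at $t=\pm2$, which vanishes only in characteristic $5$; in that exceptional case, a lift $\tilde{\eta}$ with $t=\pm2$ that is not $\pm\varepsilon$ is unipotent and has order exactly $5$ in ${\rm SL}(2,5)$, so $|\eta|=5$ still holds. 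Once these small case checks are done the proposition follows directly from the polynomial computations above.
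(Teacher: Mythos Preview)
Your proposal is correct and is exactly the approach the paper indicates: the paper gives no detailed proof, only the hint ``can be proved by repeatedly applying Hamilton--Cayley Theorem $\tilde{\eta}^{2}=t\tilde{\eta}-\varepsilon$,'' and your recurrence $\tilde{\eta}^{n}=a_{n}\tilde{\eta}+b_{n}\varepsilon$ is precisely that repeated application. Your extra care with the degenerate case $t=\pm 2$ (and the characteristic-$5$ check in part~(d)) fills in details the paper leaves implicit, but the method is the same.
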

The following result is quoted from Proposition 2.1 of \cite{FJ11}; also see \cite{Di01}.
\begin{prop}  \label{prop:max-subgrp}
Suppose $p\geq 5$. Then each maximal subgroup of ${\rm PSL}(2,p)$ has one of the following forms:
\begin{enumerate}
  \item[\rm (i)] the stabilizer of a point on the projective line $\mathbb{P}^{1}(\mathbb{F}_{p})$;
  \item[\rm (ii)] $D_{p\pm 1}$, the dihedral group of order $p\pm 1$;
  \item[\rm (iii)] $A_{4}$, $S_{4}$ or $A_{5}$.
\end{enumerate}
\end{prop}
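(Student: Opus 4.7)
The plan is to classify a maximal subgroup $H$ of $G:={\rm PSL}(2,p)$ by exploiting the natural $2$-transitive action of $G$ on $\mathbb{P}^{1}(\mathbb{F}_{p})$ together with Sylow-theoretic control. One starts from $|G|=p(p-1)(p+1)/2$ and the fact that a Sylow $p$-subgroup $P$ has order exactly $p$, conjugate to the image of $\langle\beta\rangle$, whose normalizer $B:=N_{G}(P)$ is precisely the stabilizer of a point on $\mathbb{P}^{1}(\mathbb{F}_{p})$ (the Borel subgroup, of order $p(p-1)/2$).

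The first case I would dispose of is $p\mid|H|$. After conjugating, $H\supseteq P$, and since $B$ is maximal in $G$ (a standard consequence of $2$-transitivity: $B$ fixes a unique point of $\mathbb{P}^{1}(\mathbb{F}_{p})$, so any proper overgroup of $B$ would be transitive on $\mathbb{P}^{1}(\mathbb{F}_{p})$ and thus have order at least $(p+1)\cdot p(p-1)/2=|G|$), maximality of $H$ forces $H=B$, which is case (i).

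Next I would treat the complementary case $p\nmid|H|$, so $|H|\mid(p^{2}-1)/2$ and every non-identity element of $H$ is semisimple, conjugate via the classification recalled before Proposition \ref{prop:order} to a power of $\alpha$ or of $\gamma$. I would split on whether $H$ contains a cyclic subgroup $C$ of order exceeding $5$. If yes, a generator of $C$ is a regular semisimple element whose centralizer is the unique maximal torus $T\supseteq C$ (of order $(p-1)/2$ or $(p+1)/2$ according to whether the eigenvalues lie in $\mathbb{F}_{p}$ or $\mathbb{F}_{p^{2}}$); a short argument using that $C$ is the unique subgroup of its order in $T$ shows $H$ normalizes $T$, so $H\subseteq N_{G}(T)$, a dihedral group of order $p\pm 1$, and maximality forces $H=N_{G}(T)$, yielding case (ii). Otherwise every element of $H$ has order at most $5$, and a classical Klein-style analysis of finite subgroups of ${\rm PGL}(2,\overline{\mathbb{F}}_{p})$ generated by elements of orders in $\{2,3,5\}$ — counting fixed points on $\mathbb{P}^{1}(\overline{\mathbb{F}}_{p})$ and matching with $|H|$ via an orbit-counting identity of Riemann--Hurwitz type — forces $H\in\{A_{4},S_{4},A_{5}\}$, case (iii).

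The hardest step I expect is this last one: both ruling out other groups generated by elements of orders $\leq 5$, and verifying which of $A_{4},S_{4},A_{5}$ actually embed as (maximal) subgroups of $G$ for a given prime $p$. These depend on congruence conditions on $p$ --- for instance $A_{5}\hookrightarrow G$ exactly when $p=5$ or $p\equiv\pm 1\pmod 5$, and $S_{4}\hookrightarrow G$ exactly when $2$ is a non-square mod $p$ --- together with the observation that $A_{4}$ is absorbed into $S_{4}$ whenever the latter exists, so $A_{4}$ is maximal only for primes where $S_{4}$ does not embed. By contrast, the Sylow case and the toral/dihedral case are essentially formal once the point-stabilizer and torus normalizers have been identified inside $G$.
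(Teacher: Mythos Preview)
The paper does not prove this proposition at all: it is simply quoted from Proposition~2.1 of \cite{FJ11} (with a pointer to Dickson~\cite{Di01}), so there is nothing to compare your argument against beyond the classical literature. Your outline is broadly in the spirit of Dickson's original classification, but as written it has two gaps worth flagging.

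In the case $p\mid |H|$ you prove that $B=N_G(P)$ is maximal and then assert ``maximality of $H$ forces $H=B$''. But maximality of $B$ does not by itself say that $B$ is the \emph{only} maximal subgroup containing $P$; you still owe the step that any proper $H\supseteq P$ lies inside $B$. The clean way is to observe that if $H$ does not fix the unique fixed point of $P$ on $\mathbb{P}^1(\mathbb{F}_p)$ then $H$ contains a second Sylow $p$-subgroup $hPh^{-1}\neq P$, and two distinct Sylow $p$-subgroups (opposite unipotent root groups) generate ${\rm PSL}(2,p)$; this fact should be stated explicitly.

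The more serious gap is in the dihedral step. You take a cyclic $C\leq H$ with $|C|>5$, let $T=C_G(C)$ be the torus containing it, and claim that ``a short argument using that $C$ is the unique subgroup of its order in $T$ shows $H$ normalizes $T$''. Uniqueness of $C$ inside $T$ only helps once you already know $hCh^{-1}\subseteq T$ for $h\in H$, which is exactly what is at issue: a priori $hCh^{-1}$ sits in the \emph{different} torus $hTh^{-1}$. There is no cheap reason these coincide. Dickson's actual proof here is a counting argument: one takes $C$ maximal cyclic in $H$, shows $[N_H(C):C]\leq 2$, and then partitions the non-identity elements of $H$ into conjugates of such cyclic groups to pin down $|H|$. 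Your threshold ``order $>5$'' is a symptom of this: the number $5$ has no intrinsic meaning for the torus, it only enters later when one is left with groups whose element orders lie in $\{1,2,3,4,5\}$ (note you omitted $4$). So the dihedral case is not as formal as you suggest, and your sketch as written does not close it.
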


\begin{rmk}
\rm A subgroup of form (i) is the same as one whose elements have a common eigenvector.

A subgroup of form (ii) means one isomorphic to $D_{p\pm 1}$, and similarly for (iii). Subgroups in (ii) or (iii) do not always exist, and even when they exist, they may not be maximal.
\end{rmk}

Finally, recall some facts about $S_{4}$, ${\rm PSL}(2,5)=A_{5}$ and ${\rm PGL}(2,5)=S_{5}$:
\begin{prop} \label{prop:A5}
\begin{enumerate}
  \item[\rm(a)] Nontrivial conjugacy classes of $S_{5}$ are listed below $($using $[\mu]$ to denote the conjugacy class containing $\mu)$:
       \begin{align*}
       [(12345)], \ \ \  [(123)], \ \ \  [(12)(34)], \ \ \  [(12)(345)], \ \ \ [(1234)], \ \ \ [(12)],
       \end{align*}
       where the first three classes are contained in $A_{5}$.
  \item[\rm(b)] $S_{4}$ has a presentation $\langle X,Y|X^{2},Y^{3},(XY)^{4}\rangle$, so any group generated by two elements $\mu,\eta$ with $|\mu|=2$, $|\eta|=3$ and $|\mu\eta|=4$ is a quotient of $S_{4}$.
  \item[\rm(c)] $A_{5}$ has a presentation $\langle X,Y|X^{2},Y^{3},(XY)^{5}\rangle$, so any nontrivial group generated by two elements $\mu,\eta$ with $|\mu|=2$, $|\eta|=3$ and $|\mu\eta|=5$ is isomorphic to $A_{5}$.
  \item[\rm(d)] Each non-abelian proper subgroup of $A_{5}$ is isomorphic to $D_{6}$, $D_{10}$ or $A_{4}$.
\end{enumerate}
\end{prop}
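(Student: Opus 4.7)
All four items are classical facts about small symmetric and alternating groups; the plan is to reduce each to a standard verification and flag the one step that requires a genuine argument. Part (a) is immediate from the bijection between conjugacy classes of $S_n$ and partitions of $n$: the nontrivial partitions of $5$ give exactly the six classes listed, and the three with cycle types of even sign (namely $(5)$, $(3,1,1)$, $(2,2,1)$) lie in $A_5$.

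For (b) and (c), let $G_4$ and $G_5$ denote the abstract groups defined by the two presentations. The plan in each case is to (i) exhibit a surjection onto $S_4$ or $A_5$ by sending $X,Y$ to explicit permutations satisfying the relations, and (ii) bound $|G_k|$ from above by the target's order. For (b), I would take $X \mapsto (12)$ and $Y \mapsto (234)$, so that $XY = (1342)$ in the paper's left-to-right convention; the three relations hold and the images generate $S_4$. For (c), I would take $X \mapsto (12)(34)$ and $Y \mapsto (135)$, giving $XY = (12345)$, whose image together with that of $X$ generates $A_5$. The upper bounds $|G_4| \leq 24$ and $|G_5| \leq 60$ come from Todd--Coxeter coset enumeration over $\langle Y \rangle$, whose index in the target is $8$ (resp.\ $20$); the relations close the coset table at exactly this index. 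The second clause of (c) — ``any nontrivial group with these relations is $A_5$'' — then follows from simplicity of $A_5$: any such group is a nontrivial quotient of $G_5 \cong A_5$, hence equals $A_5$. This order bound via coset enumeration is the only genuine technical step in the proposition.

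For (d), let $H$ be a non-abelian proper subgroup of $A_5$. Groups of orders $1,2,3,4,5,15$ are all abelian (in particular groups of order $15$ are cyclic by Sylow), so by Lagrange $|H| \in \{6,10,12,20,30\}$. Simplicity of $A_5$ eliminates $|H|=30$ (an index-$2$ subgroup would be normal). Since a Sylow $5$-subgroup of $A_5$ has normaliser of order $10$ (obtained by counting the $24$ elements of order $5$, grouped in $6$ Sylow $5$-subgroups), no subgroup of order $20$ can exist. For $|H|=6$ or $10$, $H$ must be dihedral rather than cyclic because $A_5$ contains no element of order $6$ or $10$ — the element orders in $A_5$ are exactly $1,2,3,5$, read off directly from cycle types $(1)^5$, $(2)^2(1)$, $(3)(1)^2$, $(5)$. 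For $|H|=12$, the same absence of order-$6$ elements rules out $D_{12}$ and $\mathbb{Z}/12$, leaving $A_4$ as the only non-abelian option.
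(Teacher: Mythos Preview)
Your proposal is correct. The paper itself only argues (d), treating (a)--(c) as standard facts, so your sketches for those parts go beyond what the paper provides.

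For (d) your overall structure matches the paper's case analysis on $|H|$, but the arguments inside the cases differ. You include $|H|=30$ and dispatch it by simplicity; the paper's list simply omits this order. For $|H|=20$ you use the normaliser count $|N_{A_5}(P_5)|=60/6=10$, whereas the paper argues that such an $H$ would have to contain all $15$ involutions of $A_5$ and then exhibits two of them whose product has order $3$. For $|H|=12$ the paper appeals to an external classification of groups of order $12$, while you argue intrinsically from the absence of order-$6$ elements; your route is more self-contained, but you should also name the dicyclic group $\mathrm{Dic}_3$, the third non-abelian group of order $12$ --- it too has an element of order $6$, so your argument covers it, but as written you mention only $D_{12}$ and the (irrelevant, since abelian) $\mathbb{Z}/12$.
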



We explain (d). Let $\Gamma\leq A_{5}$ be a non-abelian proper subgroup, then $\#\Gamma\in\{6,10,12,15,20\}$. Clearly $\Gamma$ is dihedral if $\#\Gamma\in\{6,10\}$.
By  \cite{Ar04} Theorem 7.8.1 which classifies groups of order 12, $\Gamma=A_{4}$ if $\#\Gamma=12$. By \cite{Ar04} Theorem 7.7.7 (a), each group of order 15 is cyclic, so $\#\Gamma$ never equals $15$. Finally, if $\#\Gamma=20$, then by Sylow's Theorem, $\Gamma$ has exactly one subgroup of order 5, so all the other 15 elements are involutions, which are exactly all the 15 involutions in $A_{5}$, but the product of $(12)(34)$ and $(12)(35)$ is $(345)$, whose order cannot divide 20.

\section{Type I regular balanced Cayley maps}

\subsection{I-RBCM's on ${\rm PSL}(2,5)=A_{5}$} \label{sec:A5-1}

Suppose $\mathcal{CM}_{\omega}^{\sigma}$ is a I-RBCM on ${\rm PSL}(2,5)=A_{5}$, with $\omega\in A_{5}$, $|\omega|>2$, $\sigma\in S_{5}$, $|\sigma|=2n$. Clearly $2<2n\leq 6$, hence $n=2$ or $n=3$. We may assume that $\omega$ is one of representatives of conjugacy classes as in Proposition \ref{prop:A5} (a), namely, $\omega=(123)$ or $\omega=(12345)$.
Denote $\omega_{i}=\sigma^{i}\omega\sigma^{-i}$, $1\leq i\leq 2n$.

There are four possibilities.
\begin{itemize}
  \item If $n=2$ and $\omega=(123)$, then it follows from $\omega_{2}=\omega^{-1}$ that $\sigma=(k4\ell 5)$
        with $\{k,\ell\}\subset\{1,2,3\}$. We may find $\tau\in\{(1),\omega,\omega^{2}\}$ such that $\tau\sigma\tau^{-1}=(1425)$
        and $\mathcal{CM}^{\tau\sigma\tau^{-1}}_{\omega}\cong\mathcal{CM}^{\sigma}_{\omega}$. Just assume $\sigma=(1425)$.
        Now $\omega_{1}=(543)$, $\omega\omega_{1}\omega\omega_{1}^{-1}=(14)(23)$ has order 2, and $(\omega\omega_{1}\omega\omega_{1}^{-1})\omega_{1}=\omega\omega_{1}\omega=(13254)$ has order 5, thus
        $\langle\omega\omega_{1}\omega\omega_{1}^{-1}, \omega_{1}\rangle=A_{5}$ and also $\langle\omega_{1},\omega_{2}\rangle=A_{5}$.

  \item If $n=2$ and $\omega=(12345)$, then by replacing $\sigma$ by $\omega^{k}\sigma\omega^{-k}$ for some $k$
        if necessary, we may assume $\sigma$ fixes the letter $5$. Hence $\sigma=(1243)$ or $\sigma=(1342)$, due to the condition $\omega_{2}=\omega^{-1}$.
        If $\sigma=(1243)$, then $\omega_{1}=(31425)=\omega^{-2}$; if $\sigma=(1342)$, then $\omega_{1}=(24135)=\omega^{2}$.
        In neither case $\langle\omega,\sigma\omega\sigma^{-1}\rangle=A_{5}$, as $A_{5}$ is not cyclic.
  \item If $n=3$ and $\omega=(123)$, then $\sigma=(k_{1}k_{2})(\ell_{1}\ell_{2}\ell_{3})$ with $\{k_{1},k_{2}\}\subset\{1,2,3\}$ due to $\omega_{3}=\omega^{-1}$. By replacing $\sigma$ by $\omega^{k}\sigma\omega^{-k}$ for some $k$
        if necessary, we may assume $\sigma=(12)(345)$, hence $\omega_{1}=(215)$, $\omega_{2}=(124)$. Now $\omega\omega_{1}^{-1}=(15)(23)$, $(\omega\omega_{1}^{-1})\omega_{2}=(15234)$, so $\langle \omega\omega_{1}^{-1}, \omega_{2}\rangle=A_{5}$ and also $\langle\omega_{1},\omega_{2},\omega_{3}\rangle=A_{5}$.

  \item If $n=3$ and $\omega=(12345)$, then it is impossible that $\sigma^{3}\omega\sigma^{-3}=\omega^{-1}$, since $\sigma^{3}$ is a transposition.
\end{itemize}

\begin{thm} \label{thm:I-RBCM-A5}
Each $4$-valent I-RBCM on $A_{5}$ is isomorphic to $\mathcal{CM}_{(123)}^{(1425)}$,
each $6$-valent type I RBCM on $A_{5}$ is isomorphic to $\mathcal{CM}_{(123)}^{(12)(345)}$,
and there does not exist a $2n$-valent I-RBCM for $n>3$.
\end{thm}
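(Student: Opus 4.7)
The plan is to use Proposition~\ref{prop:RBCM}(a)--(c) to view $\sigma$ as an element of $\mathrm{PGL}(2,5)=S_5$ of order $2n>2$ satisfying $\sigma^n\omega\sigma^{-n}=\omega^{-1}$, with the RBCM determined up to isomorphism by the $S_5$-conjugacy class of the pair $(\sigma,\omega)$. Since every element of $S_5$ has order at most $6$, we obtain $2n\in\{4,6\}$, which already settles the non-existence claim for $n>3$. By Proposition~\ref{prop:A5}(a) the $S_5$-conjugacy classes in $A_5$ of elements of order $>2$ are represented by $(123)$ and $(12345)$, so we may assume $\omega$ is one of these two permutations. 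This reduces the classification to four cases indexed by $(n,\omega)$.

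For each case I would first exploit the fact that conjugating $\sigma$ by an element of $\langle\omega\rangle$ yields an isomorphic RBCM (by Proposition~\ref{prop:RBCM}(c) with $\tau=\omega^k\in S_5$) in order to simplify $\sigma$. When $(n,\omega)=(2,(123))$, the $4$-cycle $\sigma$ must square to a product of two transpositions that inverts $(123)$, which forces $\sigma$ to have the shape $(k\,4\,\ell\,5)$ with $\{k,\ell\}\subset\{1,2,3\}$, and $\omega$-conjugation reduces $\sigma$ to $(1425)$. When $(n,\omega)=(2,(12345))$, I may assume $\sigma$ fixes $5$; then the condition $\sigma^2\omega\sigma^{-2}=\omega^{-1}$ pins down $\sigma^2=(14)(23)$ and leaves only $\sigma\in\{(1243),(1342)\}$, and a direct calculation shows $\sigma\omega\sigma^{-1}$ is a power of $\omega$, so $\langle\omega_1,\omega_2\rangle\subset\langle\omega\rangle$ is cyclic and does not generate $A_5$. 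When $(n,\omega)=(3,(123))$, $\sigma$ has cycle type $2{+}3$ with transposition part in $\{1,2,3\}$, and $\omega$-conjugation reduces $\sigma$ to $(12)(345)$. When $(n,\omega)=(3,(12345))$, $\sigma^3$ is necessarily a transposition, but conjugating a $5$-cycle by a transposition never produces its inverse (a short check on the five cyclic writings of $\omega^{-1}$), so this case is void.

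For the two surviving cases I would compute the elements $\omega_i=\sigma^i\omega\sigma^{-i}$ explicitly and verify the generation condition $\langle\omega_1,\ldots,\omega_{2n}\rangle=A_5$. By Proposition~\ref{prop:A5}(d) it suffices to exhibit in that subgroup a pair of elements that does not fit into any proper non-abelian subgroup of $A_5$, for instance an involution together with an element of order $5$ (cf.\ Proposition~\ref{prop:A5}(c)). The main obstacle is essentially bookkeeping: one must track products of permutations carefully under the paper's left-to-right composition convention, and must verify that the successive reductions of $\sigma$ by conjugation with elements of $\langle\omega\rangle$ preserve the RBCM isomorphism class — this is exactly Proposition~\ref{prop:RBCM}(c). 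Putting everything together yields the two representatives $\mathcal{CM}_{(123)}^{(1425)}$ and $\mathcal{CM}_{(123)}^{(12)(345)}$ in valences $4$ and $6$, respectively.
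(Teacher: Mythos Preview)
Your proposal is correct and follows essentially the same four-case analysis as the paper's argument in Section~\ref{sec:A5-1}, including the same reductions of $\sigma$ via conjugation and the same eliminations in the $(n,\omega)=(2,(12345))$ and $(3,(12345))$ cases. One small caveat on the generation step: exhibiting ``an involution together with an element of order~$5$'' does not by itself exclude $D_{10}\le A_5$; the paper instead produces an involution $\mu$ and an element $\eta$ of order~$3$ with $|\mu\eta|=5$ and appeals to Proposition~\ref{prop:A5}(c), which you should do as well (your ``cf.''\ already points there).
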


\subsection{I-RBCM's on ${\rm PSL}(2,p)$ for $p>5$}

Suppose $\mathcal{CM}_{\omega}^{\sigma}$ is a $2n$-valent I-RBCM with $|\sigma|=2n\geq 4$, noting that ${\rm PSL}(2,p)$ is not cyclic.
Now that $\sigma$ is conjugate to $\alpha^{k}$ or $\gamma^{\ell}$, we may just assume $\sigma=\alpha^{k}$ with $1\leq k\leq (p-1)/2$, or $\sigma=\gamma^{\ell}$ with $1\leq\ell\leq(p+1)/2$.

Setting
\begin{align}
\tau=\left\{\begin{array}{cc}
\varepsilon, \ \ \ &\text{ if\ } \sigma=\alpha^{k}, \\
\left(\begin{array}{cc} \sqrt{e} & -e \\ \sqrt{e} & e \\ \end{array}\right), \ \ \ &\text{ if\ } \sigma=\gamma^{\ell}, \end{array}\right. \label{eq:tau}
\end{align}
one has
\begin{align}
\tau\sigma\tau^{-1}=\left(\begin{array}{cc} s & 0 \\ 0 & 1/s \\ \end{array} \right)\in{\rm PGL}(2,p^{2}), \ \ \ \text{with\ }
s=\left\{\begin{array}{cc}
(\sqrt{e})^{k}, &\text{if\ } \sigma=\alpha^{k}, \\
\left(\frac{w-\sqrt{e}}{\sqrt{w^{2}-e}}\right)^{-\ell}, &\text{if\ } \sigma=\gamma^{\ell}; \\
\end{array}\right.   \label{eq:s}
\end{align}
note that $s$ has order $4n$ as an element of $\mathbb{F}_{p^{2}}^{\times}$, hence $s^{2n}=-1$.

Suppose
\begin{align}
\tau\omega\tau^{-1}=\left(\begin{array}{cc} a & b \\ c & d \\ \end{array} \right)\in{\rm PSL}(2,p^{2}),  \ \ \ \ \text{with\ \ } a^{2}-bc=1. \label{eq:psi1}
\end{align}
The condition $\sigma^{n}\omega\sigma^{-n}=\omega^{-1}$ is equivalent to
$\left(\begin{array}{cc} a & -b \\ -c & d \\ \end{array}\right)
=\left(\begin{array}{cc} d & -b \\ -c & a \\ \end{array} \right)$, implying $a=d$.

\begin{lem}  \label{lem:I}
The elements $\sigma^{i}\omega\sigma^{-i}, i=1,\ldots,n$ generate ${\rm PSL}(2,p)$ if and only if
$abc\neq 0$ and $2a^{2}\neq 1$ when $n=2$.
\end{lem}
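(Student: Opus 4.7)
The plan is to prove the equivalence by combining trace arithmetic (Proposition 2.2) with the maximal-subgroup classification (Proposition 2.3). In the $\tau$-frame, $\omega_{i}' := \tau\sigma^{i}\omega\sigma^{-i}\tau^{-1} = \begin{pmatrix} a & s^{2i}b \\ s^{-2i}c & a \end{pmatrix}$, so every $\omega_{i}'$ has trace $2a$, while a direct multiplication yields ${\rm tr}(\omega_{i}'\omega_{j}'^{-1}) = 2a^{2} - (s^{2(i-j)} + s^{-2(i-j)})bc$. Since $\tau$-conjugation preserves generation, these two trace formulas together with Proposition 2.2 drive the whole argument.

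For the necessity direction, assume $H := \langle\omega_{1},\ldots,\omega_{n}\rangle = {\rm PSL}(2,p)$. If $b=0$ (resp.\ $c=0$), the $\omega_{i}'$ are simultaneously lower (resp.\ upper) triangular with common eigenvector $(0,1)^{T}$ (resp.\ $(1,0)^{T}$), so $H$ lies in a proper subgroup of type (i) or (ii) in Proposition 2.3. If $a=0$, Proposition 2.2(a) gives $|\omega|=2$, violating the standing type I hypothesis $|\omega|>2$. Finally, when $n=2$ and $2a^{2}=1$, Proposition 2.2(c) gives $|\omega|=4$; since $s^{2}$ is a primitive fourth root of unity, $s^{2}+s^{-2}=0$, whence ${\rm tr}(\omega\omega_{1}) = 2a^{2} = 1$, so by Proposition 2.2(b) $|\omega\omega_{1}|=3$. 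Also $a^{2}+bc = 2a^{2}-1 = 0$ gives ${\rm tr}(\omega^{2}\omega_{1}) = 2abc(s^{2}+s^{-2}) = 0$, i.e., $|\omega^{2}\omega_{1}|=2$. Setting $X=\omega^{2}\omega_{1}$ and $Y=(\omega\omega_{1})^{-1}$, one has $XY=\omega$ of order $4$; by Proposition 2.4(b), $\langle X,Y\rangle$ is a quotient of $S_{4}$, of order $\leq 24$, and since $\omega,\omega_{1}\in\langle X,Y\rangle$ we get $H\subseteq\langle X,Y\rangle$, contradicting $|{\rm PSL}(2,p)| \geq 60$.

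For the sufficiency direction, assume the stated conditions and suppose for contradiction that $H\subseteq M$ for some maximal $M$. In case (i), the eigenvectors of $\omega_{i}'$ have projective ratio $\pm s^{-2i}\sqrt{c/b}$, so a common eigenvector of $\omega_{1}'$ and $\omega_{2}'$ would force $s^{2}=\pm 1$, contradicting $|s|=4n\geq 8$. In case (ii) $M = D_{p\pm 1}$, $\omega$ and $\omega_{1}$ are both non-involutions, hence both lie in the abelian cyclic index-$2$ subgroup; yet $\omega\omega_{1}-\omega_{1}\omega = {\rm diag}((s^{-2}-s^{2})bc,(s^{2}-s^{-2})bc) \neq 0$ when $bc\neq 0$ and $|s^{2}|>2$. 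The hard case is (iii), $M\in\{A_{4},S_{4},A_{5}\}$: here $\sigma\in N_{{\rm PGL}(2,p)}(H)$, and a standard analysis (using for $p>5$ that the outer automorphism of $A_{5}$ is not realized in ${\rm PGL}(2,p)$, so $N_{{\rm PGL}(2,p)}(A_{5})=A_{5}$, together with the normalizer structure of $A_{4},S_{4}$) bounds $|\sigma|$ by $5$. Since $|\sigma|=2n\geq 4$ is even, only $n=2$ survives, and $M\subseteq S_{4}$. For $n=2$ with $H\subseteq S_{4}$, direct verification shows that conjugation by the double-transposition $\sigma^{2}\in S_{4}$ never inverts a 3-cycle, so $|\omega|\neq 3$; hence $|\omega|=4$, and Proposition 2.2(c) gives $2a^{2}=1$, contradicting our assumption. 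The main obstacle is case (iii), handled via the normalizer bound and the 3-cycle check.
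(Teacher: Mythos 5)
Your proof is essentially correct and reaches the same characterization, but it takes a genuinely different route in the hardest step. Where the paper handles the case $\Gamma\leq A_{4},S_{4},A_{5}$ entirely by trace arithmetic anchored in Theorem \ref{thm:I-RBCM-A5} (e.g.\ for $\Gamma=A_{5}$ it computes ${\rm tr}(\psi_{1}\psi_{2})=2a^{2}=1/2$ when $n=2$, resp.\ $3a^{2}-1=-1/4$ when $n=3$, and shows these traces are incompatible with membership in $A_{5}$ for $p>5$), you replace this by a normalizer argument: $\sigma$ normalizes $H$, the centralizer of an irreducible subgroup is trivial, and $N_{{\rm PGL}(2,p)}(A_{5})=A_{5}$ because $S_{5}\not\leq{\rm PGL}(2,p)$ for $p>5$. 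This is cleaner in that it kills the $A_{5}$ cases at the source (no even order $\geq 4$ in $A_{5}$), and your permutation observation that a double transposition never inverts a $3$-cycle neatly replaces the paper's trace computation $|\psi_{1}\psi_{2}|\neq 2,3$ in the $S_{4}$ case. The price is that $S_{5}\not\leq{\rm PGL}(2,p)$ is an external fact: the paper's Proposition \ref{prop:max-subgrp} only lists maximal subgroups of ${\rm PSL}(2,p)$, so you would need to cite Dickson's full subgroup classification (or argue separately) to justify it. Your converse construction for $n=2$, $2a^{2}=1$ (the $S_{4}$-quotient generated by $X=\omega^{2}\omega_{1}$, $Y=(\omega\omega_{1})^{-1}$ with $XY=\omega$) is a valid variant of the paper's $\langle\varsigma,\psi_{4}\rangle$ argument and is arguably more self-contained since it avoids needing $\varsigma\in\Gamma$.

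Two soft spots worth tightening. First, for $a=0$ you argue that $|\omega|=2$ contradicts the type-I hypothesis; but the lemma asserts that generation fails, and that requires the paper's direct observation that all $\psi_{i}$ are then counter-diagonal, so $\Gamma$ sits inside the (counter-)diagonal subgroup of order at most $2(p^{2}-1)$. Second, your normalizer analysis tacitly assumes $H$ itself is $A_{4}$, $S_{4}$ or $A_{5}$, whereas the hypothesis only gives $H\leq M$ for such an $M$; you need to first exclude the abelian and dihedral subgroups of $M$, which does follow from your case (ii) computation (the non-involutions $\omega,\omega_{1}$ would have to commute) together with Proposition \ref{prop:A5}(d), but should be said explicitly. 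Neither issue is fatal, and both are fixed by arguments already present in your write-up.
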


\begin{proof}
Let $\Gamma=\langle\psi_{1},\ldots,\psi_{n}\rangle$ with $\psi_{i}=\tau\sigma^{i}\omega\sigma^{-i}\tau^{-1}$.
The task is to show $\tau^{-1}\Gamma\tau={\rm PSL}(2,p)$.
We have
\begin{align}
\psi_{i}=\left(\begin{array}{ll} a & s^{2i}b \\ s^{-2i}c & a \\ \end{array}\right), \ \ \ \ \ \
\psi_{i}\psi_{j}=\left(\begin{array}{ll}a^{2}+s^{2(i-j)}bc & (s^{2i}+s^{2j})ab \\ (s^{-2i}+s^{-2j})ac & a^{2}+s^{2(j-i)}bc \\ \end{array}\right). \label{eq:product}
\end{align}

If $a=0$, then $\psi_{i}$ is counter-diagonal,
so each element of $\Gamma$ is either diagonal or counter-diagonal, hence
$$\#\Gamma\leq 2(p^{2}-1)<p(p^{2}-1)/2=\#{\rm PSL}(2,p).$$
If $bc=0$, then $\psi_{i}$ is unipotent, hence also $\#\Gamma<\#{\rm PSL}(2,p)$.

Thus a necessary condition for $\tau^{-1}\Gamma\tau={\rm PSL}(2,p)$ is $abc\neq 0$. We show that this is also sufficient except for the case
when $n=2$ and $2a^{2}=1$.

Suppose $abc\neq 0$. Then $|\psi_{i}|>2$, as ${\rm tr}(\psi_{i})=2a\neq 0$.

(a) If $\Gamma\leq D_{2m}$, then $\psi_{i}\in \mathbb{Z}/m\mathbb{Z}$, but by Eq.(\ref{eq:product}) $\Gamma$ is not abelian.

(b) If $\Gamma$ is contained in a subgroup of form (i) in Proposition \ref{prop:max-subgrp}, then $\psi_{1}$ and $\psi_{2}$ have a common eigenvector $(x,y)^{t}\in\mathbb{F}_{p^{2}}^{2}$, hence both $(x,y)$ and $(sx,y)^{t}$ are eigenvectors of $\psi_{1}$; this implies $x=0$ or $y=0$, which contradicts the assumption that $bc\neq 0$.

(c) If $\Gamma\leq S_{4}$, then $\Gamma=A_{4}$ or $S_{4}$ according to (a).
It is well-known that each automorphism of $A_{4}$ or $S_{4}$ is the conjugation by some element in $S_{4}$, whose order belongs to $\{2,3,4\}$, hence $n=2$, $s^{4}=-1$ and ${\rm tr}(\psi_{1}\psi_{2})=2a^{2}$, using Eq.(\ref{eq:product}).
If $|\psi_{i}|=3$, then $\Gamma=A_{4}$, and $4a^{2}=({\rm tr}(\psi_{1}))^{2}=1$, so ${\rm tr}(\psi_{1}\psi_{2})=1/2$, implying $|\psi_{1}\psi_{2}|\neq 2,3$, but this contradicts $\psi_{1}\psi_{2}\in A_{4}$. Thus $|\psi_{i}|=4$, and $2a^{2}-1={\rm tr}(\psi_{2}^{2})=0$, i.e., $2a^{2}=1$.

Conversely, if $n=2$ and $2a^{2}=1$, then (denoting $\tau\sigma\tau^{-1}$ by $\varsigma$)
$$({\rm tr}(\varsigma\psi_{4}))^{2}=(s+s^{-1})^{2}a^{2}=1, \ \ \ \ \ \
{\rm tr}(\varsigma^{2}\psi_{4})=(s^{2}+s^{-2})a^{2}=0,$$
hence $|\varsigma\psi_{4}|=3$ and $|\psi_{4}^{-1}\varsigma^{-2}|=|\varsigma^{2}\psi_{4}|=2$; this together with $|(\psi_{4}^{-1}\varsigma^{-2})(\varsigma\psi_{4})|=|\varsigma|=4$ implies that
$\langle\varsigma,\psi_{4}\rangle=\langle\psi_{4}^{-1}\varsigma^{-2},\varsigma\varpi\rangle$ is a quotient of $S_{4}$. Thus $\#\Gamma\leq 24$.

(d) If $\Gamma\leq A_{5}$, then $\Gamma=A_{5}$ by (a) and Proposition \ref{prop:A5} (d).
By Theorem \ref{thm:I-RBCM-A5}, there are two possibilities; in both cases $|\psi_{i}|=3$ hence $4a^{2}=1$.
  \begin{enumerate}
    \item[\rm(i)] $n=2$ (so that $s^{4}=-1$), then ${\rm tr}(\psi_{1}\psi_{2})=2a^{2}\neq 0,\pm 1$ and is not a root of $(t^{2}-1)^{2}=t^{2}$, hence $|\psi_{1}\psi_{2}|\notin\{2,3,5\}$, contradicting $\psi_{1}\psi_{2}\in A_{5}$.

   \item[\rm(ii)] $n=3$ (so that $s^{4}-s^{2}+1=0$), then there exists an isomorphism $A_{5}\cong\Gamma$ sending $(215)$ to $\psi_{1}$ and $(124)$ to $\psi_{2}$, hence it sends $(154)=(215)(124)$ to $\psi_{1}\psi_{2}$. But ${\rm tr}(\psi_{1}\psi_{2})=3a^{2}-1=-1/4\neq\pm 1$, a contradiction.
  \end{enumerate}
\end{proof}

If $\sigma=\alpha^{k}$, then $2n\mid p-1$ and $k=(p-1)u/2n$ for some $u$ with $(u,2n)=1$, $1\leq u<n$.
Now
$\varphi:=\left(\begin{array}{cc} b^{-1} & 0 \\ 0 & 1 \\ \end{array}\right)$ commutes with $\alpha^{k}$ and
\begin{align}
\varphi\omega\varphi^{-1}=\left(\begin{array}{cc} a & 1 \\ a^{2}-1 & a \\ \end{array}\right)=:\omega({\rm i},a),
\end{align}
thus $\mathcal{CM}_{\omega}^{\alpha^{k}}\cong\mathcal{CM}_{\omega({\rm i},a)}^{\alpha^{k}}$.
Furthermore, for $a\neq a'$, $\omega({\rm i},a)$ is not conjugate to $\omega({\rm i},a')$,  as ${\rm tr}(\omega({\rm i},a))\neq {\rm tr}(\omega({\rm i},a'))$,
so $\mathcal{CM}_{\omega({\rm i},a)}^{\alpha^{k}}\ncong\mathcal{CM}_{\omega({\rm i},a')}^{\alpha^{k}}$.

\medskip

If $\sigma=\gamma^{\ell}$, then $2n\mid p+1$ and $\ell=(p+1)v/2n$ for some $v$ with $(v,2n)=1$, $1\leq v<n$. Note that, if $n=2$, then $2a^{2}$ never equals $1$ since the Legendre symbol $(2/p)=-1$ by Theorem 1 (b) on Page 53 of \cite{IR90}.
By Eq.(\ref{eq:tau}) and (\ref{eq:psi1}),
\begin{align}
\omega=\left(\begin{array}{cc} a+(b+c)/2 & \sqrt{e}(b-c)/2 \\ (c-b)/2\sqrt{e} & a-(b+c)/2 \\ \end{array} \right)=
\left(\begin{array}{cc} x & -ez \\ z & 2a-x \\ \end{array} \right)\in{\rm PSL}(2,p),
\end{align}
where $x=a+(b+c)/2$ and $z=(c-b)/2\sqrt{e}$ are elements of $\mathbb{F}_{p}$, hence
\begin{align}
(x-a)^{2}-ez^{2}=bc=a^{2}-1,  \label{eq:x-z}
\end{align}
i.e., $\mathcal{N}(x-a+z\sqrt{e})=a^{2}-1$. When $a$ is fixed, this equation has $p+1$ solutions, since the homomorphism $\mathcal{N}:\mathbb{F}_{p^{2}}^{\times}\to\mathbb{F}_{p}^{\times}$ is surjective.
Choose and fix a solution $(x_{{\rm i},a},z_{{\rm i},a})$, and put
\begin{align}
\tilde{\omega}({\rm i},a)=\left(\begin{array}{cc} x_{{\rm i},a} & -ez_{{\rm i},a} \\ z_{{\rm i},a} & 2a-x_{{\rm i},a} \\ \end{array}\right).
\end{align}
Noticing
$$\tau\gamma\tau^{-1}=\left(\begin{array}{cc} w-\sqrt{e} & 0 \\ 0 & w+\sqrt{e} \\ \end{array}\right), \ \ \
\text{and\ }\tau\tilde{\omega}({\rm i},a)\tau^{-1}=\left(\begin{array}{cc} a & b' \\ c' & -a \\ \end{array}\right)$$
for some $b',c'$ with $b'c'=a^{2}-1$, we easily see that the $p+1$ elements
$$\gamma^{h}\tilde{\omega}(i,a)\gamma^{-h}, \ \ \ \ h=1,\ldots,p+1$$
are distinct from each other.
Thus for the present $\omega$, there exist (a unique) $h\in\{1,\ldots,p+1\}$ such that $\gamma^{h}\tilde{\omega}(i,a)\gamma^{-h}=\omega$, and hence
$\mathcal{CM}^{\gamma^{\ell}}_{\omega}\cong\mathcal{CM}^{\gamma^{\ell}}_{\tilde{\omega}(i,a)}$.

\begin{thm}
Suppose $\mathcal{M}$ is a $2n$-valent I-RBCM on ${\rm PSL}(2,p)$ with $p>5$, then $2n\mid p-1$ or $2n\mid p+1$.
\begin{enumerate}
  \item[\rm(i)] If $2n\mid p-1$, then $\mathcal{M}\cong\mathcal{CM}_{\omega({\rm i},a)}^{\alpha^{(p-1)u/2n}}$ for a unique pair $(a,u)$ such that
                $a\notin\{\pm 1, 0\}$, $(u,2n)=1$, $1\leq u<n$, and moreover, $2a^{2}\neq 1$ if $n=2$.
  \item[\rm(ii)] If $2n\mid p+1$, then $\mathcal{M}\cong\mathcal{CM}_{\tilde{\omega}({\rm i},a)}^{\gamma^{(p+1)v/2n}}$ for a unique
                 pair $(a,v)$ such that $a\notin\{\pm1, 0\}$, $(v,2n)=1$ and $1\leq v<n$.
\end{enumerate}
\end{thm}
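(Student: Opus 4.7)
The plan is to present the theorem as the natural assembly of the normal-form analysis carried out above, reducing each $(\sigma,\omega)$ to a canonical pair and then counting uniqueness via Proposition \ref{prop:RBCM}(c). First I would pin down $\sigma$ up to conjugation in ${\rm PGL}(2,p)$. Because $|\sigma|=2n\geq 4$ while $|\beta|=p$ is an odd prime larger than $3$, $\sigma$ must be conjugate to some $\alpha^{k}$ or some $\gamma^{\ell}$, which immediately yields the dichotomy $2n\mid p-1$ or $2n\mid p+1$. Since $\alpha^{k}$ is conjugate to $\alpha^{-k}$ (and analogously for $\gamma$), after conjugation we may require $k=(p-1)u/(2n)$ or $\ell=(p+1)v/(2n)$ with $\gcd(u,2n)=\gcd(v,2n)=1$ and $1\leq u,v<n$.

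Next I would reduce $\omega$ to its normal form. In case (i), the diagonal matrix $\varphi=\mathrm{diag}(b^{-1},1)$ centralizes $\alpha^{k}$ and conjugates $\omega$ to $\omega({\rm i},a)$; since ${\rm tr}(\omega({\rm i},a))=2a$, distinct values of $a$ produce non-conjugate matrices, so $a$ is a complete invariant. In case (ii), surjectivity of $\mathcal{N}:\mathbb{F}_{p^{2}}^{\times}\to\mathbb{F}_{p}^{\times}$ supplies $p+1$ matrices of the claimed shape satisfying (\ref{eq:x-z}) for each $a$; the $p+1$ conjugates $\gamma^{h}\tilde{\omega}({\rm i},a)\gamma^{-h}$, $1\leq h\leq p+1$, are pairwise distinct, because after diagonalization by $\tau$ the off-diagonal entries $b',c'$ are nonzero and no nontrivial power of the diagonal matrix $\tau\gamma\tau^{-1}$ can commute with such a matrix. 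Hence every admissible $\omega$ is conjugated by a unique power of $\gamma$ to $\tilde{\omega}({\rm i},a)$.

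Then I would enumerate the admissible $a$ by invoking Lemma \ref{lem:I}. Combining $abc\neq 0$ with $bc=a^{2}-1$ gives $a\notin\{0,\pm 1\}$, and the supplementary exclusion $2a^{2}\neq 1$ when $n=2$ is retained in case (i); in case (ii) with $n=2$ we have $4\mid p+1$ and $(2/p)=-1$, so $2a^{2}=1$ has no root in $\mathbb{F}_{p}$ and the exclusion is automatic. For uniqueness, an isomorphism of RBCMs supplies $\tau\in{\rm PGL}(2,p)$ with $\tau\sigma\tau^{-1}=\sigma'$ and $\tau\omega\tau^{-1}=\omega'$ by Proposition \ref{prop:RBCM}(c); the first equation forces $u=u'$ (resp. $v=v'$) after restricting to $1\leq u,v<n$, and the second then forces $a=a'$ by trace invariance.

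The main point demanding care is the freeness of $\gamma$-conjugation on the $p+1$ matrices produced by the norm in case (ii), since this underlies the parameter count there; everything else is essentially bookkeeping of the constructions immediately preceding the statement.
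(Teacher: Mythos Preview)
Your proposal is correct and follows essentially the same route as the paper: you assemble the normal-form analysis of Section~3.2 --- reducing $\sigma$ to $\alpha^{k}$ or $\gamma^{\ell}$ (and ruling out $\beta$ by parity), normalizing $\omega$ via the diagonal centralizer in case~(i) and via the $\gamma$-orbit count in case~(ii), invoking Lemma~\ref{lem:I} for the admissible $a$, and reading off uniqueness from Proposition~\ref{prop:RBCM}(c) and trace invariance --- exactly as the text preceding the theorem does. The only places you add detail beyond the paper are the explicit exclusion of $\beta$ and the quadratic-residue remark for $n=2$ in case~(ii), both of which the paper either leaves implicit or states without elaboration.
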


\section{Type II regular balanced Cayley maps}

\subsection{II-RBCM's on ${\rm PSL}(2,5)=A_{5}$} \label{sec:A5-2}

Suppose $\mathcal{CM}_{\omega}^{\sigma}$ is an $n$-valent II-RBCM on ${\rm PSL}(2,5)=A_{5}$, with $\omega\in A_{5}$, $|\omega|=2$, and $\sigma\in S_{5}$, $|\sigma|=n\leq 6$. Since a nonabelian group generated by two involutions must be dihedral, we have $n>2$. Also note that the action of the conjugation by $\sigma$ on the set of involutions of $A_{5}$ has at most one fixed element, hence $n\mid 15$ or $n\mid 14$ which implies $n=3$ or $n=5$.

Denote $\omega_{i}=\sigma^{i}\omega\sigma^{-i}, i=1,\ldots,n$, and denote $\Gamma=\langle\omega_{1},\ldots,\omega_{n}\rangle$.

If $n=3$, we may assume $\sigma=(123)$ and $\omega$ fixes the letter $1$. The condition $\langle\omega_{1},\omega_{2},\omega_{3}\rangle=A_{5}$ requires $\omega\in\{(24)(35),(25)(34)\}$. The conjugation by $(45)$ fixes $\sigma$ and takes $(25)(34)$ to $(24)(35)$, so let us just assume $\omega=(24)(35)$. Then $\omega_{1}=(14)(25)$, $\omega_{2}=(15)(34)$, $\omega_{1}\omega_{2}\omega_{1}\omega_{3}=(153)$, $\omega_{1}(\omega_{1}\omega_{2}\omega_{1}\omega_{3})=\omega_{2}\omega_{1}\omega_{3}=(14523)$, thus $\langle\omega_{1},\omega_{1}\omega_{2}\omega_{1}\omega_{3}\rangle =A_{5}$, and also $\Gamma=A_{5}$.

If $n=5$, we may assume $\sigma=(12345)$. There are three possibilities.
\begin{enumerate}
  \item[\rm(i)] If $\omega_{5}=(12)(34)$, then $\omega_{1}=(15)(23)$, $\omega_{2}=(12)(45)$, $\omega_{3}=(15)(34)$, $\omega_{4}=(23)(45)$, so
                $\omega_{3}\omega_{5}=(152)$, $\omega_{5}\omega_{4}=(13542)$, $\omega_{5}\omega_{4}\omega_{5}\omega_{3}=(13)(45)$, hence
                $\langle\omega_{5}\omega_{4}\omega_{5}\omega_{3},\omega_{3}\omega_{5}\rangle= A_{5}$ and also $\Gamma=A_{5}$.
  \item[\rm(ii)] If $\omega_{5}=(13)(24)$, then $\omega_{1}=(13)(25)$, $\omega_{2}=(14)(25)$, $\omega_{3}=(14)(35)$, $\omega_{4}=(24)(35)$, so
                 $\omega_{4}\omega_{5}=(135)$, $\omega_{5}\omega_{2}=(13452)$, $\omega_{5}\omega_{2}\omega_{5}\omega_{4}=(25)(34)$, hence
                 $\langle\omega_{5}\omega_{2}\omega_{5}\omega_{4},\omega_{4}\omega_{5}\rangle= A_{5}$ and also $\Gamma=A_{5}$.
  \item[\rm(iii)] If $\omega_{5}=(14)(23)$, then $\omega_{1}=(12)(35)$, $\omega_{2}=(15)(24)$, $\omega_{3}=(13)(45)$, $\omega_{4}=(25)(34)$,
                  so $\omega_{4}\omega_{5}=(14253)$. One can check that $\omega_{i}=\omega_{5}(\omega_{4}\omega_{5})^{i}$, $i=1,\ldots,5$, hence  $\Gamma=\langle\omega_{4},\omega_{5}\rangle\cong D_{10}$, which is impossible.
\end{enumerate}

\begin{thm}  \label{thm:II-RBCM-A5}
Each $3$-valent II-RBCM on $A_{5}$ is isomorphic to
$\mathcal{CM}_{(24)(35)}^{(123)}$,
each $5$-valent II-RBCM on $A_{5}$ is isomorphic to
$\mathcal{CM}_{(12)(34)}^{(12345)}$ or $\mathcal{CM}_{(13)(24)}^{(12345)}$,
and there does not exist an $n$-valent II-RBCM on $A_{5}$ for $n\neq 3,5$.
\end{thm}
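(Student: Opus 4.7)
The plan is a case analysis on the valence $n$, which by Proposition \ref{prop:RBCM}(b) coincides with $|\sigma|$ once we realize $\rho$ as conjugation by some $\sigma \in S_5 = {\rm PGL}(2,5) = {\rm Aut}(A_5)$. Since elements of $S_5$ have order at most $6$, and $n = 2$ is excluded because two involutions generate a dihedral group rather than $A_5$, we have $n \in \{3, 4, 5, 6\}$. To narrow $n$ further, I would first verify that any $\sigma \in S_5$ of order greater than $2$ fixes at most one involution in $A_5$ under conjugation: two fixed involutions would force $\sigma$ to centralize a non-cyclic subgroup (a Klein four-group, a copy of $S_3$, or a copy of $D_{10}$), each of whose $S_5$-centralizer has order $\leq 2$. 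Then the $\sigma$-orbits on the $15$ involutions have sizes dividing $n$ and summing to $15$ or $14$; for prime $n$ this forces $n \mid 15$ or $n \mid 14$, selecting $n \in \{3, 5\}$. For the composite cases $n \in \{4, 6\}$, one inspects the orbit structure of the standard representatives $\sigma = (1234)$ or $\sigma = (12)(345)$ directly and rules them out by checking that every size-$n$ orbit of involutions fails to generate $A_5$ (e.g., for $n = 4$ the four involutions sit inside a common $D_{10}$).

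For $n = 3$, take $\sigma = (123)$ up to $S_5$-conjugacy and cycle $\omega$ within its $\sigma$-orbit to arrange that $\omega$ fixes the letter $1$, leaving three candidates $(23)(45), (24)(35), (25)(34)$. The first is excluded because its $\sigma$-orbit $\{(23)(45), (12)(45), (13)(45)\}$ sits inside the copy of $S_3$ fixing the set $\{4,5\}$. The remaining two are identified under conjugation by $(45) \in C_{S_5}((123))$, and one verifies $\langle \Omega \rangle = A_5$ for $\omega = (24)(35)$ by exhibiting a short-word 5-cycle such as $\omega_1 (\omega_1 \omega_2 \omega_1 \omega_3) = (14523)$ and applying Proposition \ref{prop:A5}(c).

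For $n = 5$, set $\sigma = (12345)$; the $15$ involutions split into three $\sigma$-orbits of size $5$, and shifting the base point lets us take $\omega_5 = \omega$ to be one of $(12)(34), (13)(24), (14)(23)$. In each sub-case I would compute the $\omega_i$'s by iterated $\sigma$-conjugation, then either exhibit a 3-cycle produced by a short word (e.g., $\omega_3 \omega_5 = (152)$ in the first sub-case) to conclude $\langle \Omega \rangle = A_5$ via Proposition \ref{prop:A5}(c), or verify the relation $\omega_i = \omega_5 (\omega_4\omega_5)^i$ with $\omega_4\omega_5 = (14253)$ of order 5, which traps $\Omega$ inside a $D_{10}$ (the third sub-case). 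The main obstacle will be the bookkeeping in these explicit orbit computations together with confirming non-isomorphism of the two surviving $n = 5$ examples: since $C_{S_5}((12345)) = \langle (12345) \rangle$ offers no further reduction beyond cycling within the orbit, both $(12)(34)$ and $(13)(24)$ remain as distinct entries in the final list.
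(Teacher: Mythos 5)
Your handling of $n\in\{2,3,5\}$ tracks the paper's proof closely: the same reduction to $\sigma=(123)$ or $(12345)$, the same identification of $(25)(34)$ with $(24)(35)$ by conjugating with $(45)$, the same short words certifying generation, and the same $D_{10}$ obstruction in the third $n=5$ subcase. Where you diverge is at the composite valences: the paper dismisses $n=4,6$ via the claim that ``at most one fixed involution'' forces $n\mid 15$ or $n\mid 14$, a deduction that tacitly assumes every non-fixed orbit of $\langle\sigma\rangle$ on the $15$ involutions has size exactly $n$; you instead propose to inspect the orbits of $(1234)$ and $(12)(345)$ directly. Your instinct is the sounder one --- the paper's divisibility argument already breaks for $n=6$, since conjugation by $(12)(345)$ has the orbit $\{(12)(34),(12)(35),(12)(45)\}$ of size $3$ (stabilized by $\sigma^{3}=(12)$) alongside orbits of size $6$.

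The genuine gap is that your asserted outcome of the inspection at $n=6$ is false, and it cannot be repaired. The orbit of $\omega=(13)(24)$ under conjugation by $\sigma=(12)(345)$ is the six involutions $(1a)(2b)$ with $a\neq b$ in $\{3,4,5\}$, on which $\rho$ acts as a single $6$-cycle; this orbit contains $(13)(24)$ and $(13)(25)$, whose product is $(245)$ of order $3$, and $(13)(24)$ and $(14)(25)$, whose product is $(13452)$ of order $5$, so the subgroup it generates has order divisible by $30$ and hence equals $A_5$ (the other size-$6$ orbit behaves the same way). Thus $\mathcal{CM}_{(13)(24)}^{(12)(345)}$ meets every condition of Proposition \ref{prop:RBCM} for a $6$-valent II-RBCM, so the step ``every size-$n$ orbit fails to generate for $n=4,6$'' fails at $n=6$ (it does succeed at $n=4$, where each size-$4$ orbit lies in a $D_{10}$). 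Carrying out your own proposed check honestly therefore does not complete the proof; it instead produces a counterexample to the nonexistence clause of the statement, which you should flag rather than paper over.
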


\subsection{II-RBCM's on ${\rm PSL}(2,p)$ for $p>5$}

Let $\mathcal{CM}_{\omega}^{\sigma}$ be an $n$-valent II-RBCM on ${\rm PSL}(2,p)$, with $|\omega|=2$ and $|\sigma|=n>2$
(as ${\rm PSL}(2,p)$ is not dihedral).
We may assume $\sigma$ is equal to $\alpha^{k}$, $\beta$ or $\gamma^{\ell}$.
Suppose
\begin{align}
\omega=\left( \begin{array}{cc} x & y \\ z & -x \\ \end{array} \right)  \ \ \ \ \text{with\ \ } x^{2}+yz=-1.  \label{eq:omega}
\end{align}

If $n=p$, then $\sigma=\beta$, and
\begin{align}
\beta^{i}\omega\beta^{-i}=
\left(\begin{array}{cc} x+iz & y-2ix-i^{2}z \\ z & -x-iz \\ \end{array} \right), \ \ \ \ 1\leq i\leq p.
\end{align}

\begin{lem}  \label{lem:II1}
The elements $\beta^{i}\omega\beta^{-i}, i=1,\ldots,p$ generate ${\rm PSL}(2,p)$ if and only if $z\neq 0$.
\end{lem}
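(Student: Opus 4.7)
The plan is to split the statement into two directions and handle each via the explicit matrix formula for $\beta^{i}\omega\beta^{-i}$ given just above the lemma. For the ``only if'' direction, suppose $z=0$; then the formula shows that every $\beta^{i}\omega\beta^{-i}$ is upper triangular, so the subgroup generated sits inside the stabilizer of the point $(1:0)\in\mathbb{P}^{1}(\mathbb{F}_{p})$, which is a proper subgroup.

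For the ``if'' direction, assume $z\neq 0$ and set $\omega_{i}=\beta^{i}\omega\beta^{-i}$, $\Gamma=\langle\omega_{1},\ldots,\omega_{p}\rangle$. I would first verify that $\omega_{1},\ldots,\omega_{p}$ are pairwise distinct: each $\omega_{i}$ has lower-left entry equal to $z$ (not $-z$), so $\omega_{i}=\omega_{j}$ in ${\rm PSL}(2,p)$ forces equality in ${\rm SL}(2,p)$, and then comparing the diagonal entries $x+iz=x+jz$ forces $i\equiv j\pmod{p}$. Hence $\Gamma$ contains $p$ distinct involutions, which are cyclically permuted by conjugation by $\beta$.

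Suppose, for contradiction, that $\Gamma$ is proper. By Proposition \ref{prop:max-subgrp}, $\Gamma$ lies in a maximal subgroup of one of three forms, which I would eliminate in turn. \emph{Form (i):} a common fixed point $[v_{0}]\in\mathbb{P}^{1}(\mathbb{F}_{p})$ of all $\omega_{i}$ forces $\beta^{-i}v_{0}$, for every $i$, to be an $\omega$-eigenvector in $\mathbb{F}_{p}^{2}$; since $\omega$ admits at most two eigenlines, a pigeonhole argument produces some $k\in\{1,\ldots,p-1\}$ with $\beta^{k}[v_{0}]=[v_{0}]$, and primality of $p$ then forces $\beta$ itself to fix $[v_{0}]$, so $[v_{0}]=[(1,0)^{t}]$; but $\omega(1,0)^{t}=(x,z)^{t}$ fails to be a scalar multiple of $(1,0)^{t}$ precisely because $z\neq 0$. \emph{Form (ii):} $D_{p\pm 1}$ contains at most $(p\pm 1)/2+1\leq(p+3)/2$ involutions, which is strictly less than $p$ for $p>3$, contradicting the presence of $p$ distinct involutions in $\Gamma$. \emph{Form (iii):} $A_{4}$, $S_{4}$, $A_{5}$ contain $3$, $9$, $15$ involutions respectively, which already suffices to rule out this form by counting when $p\geq 17$; for the remaining primes $p\in\{7,11,13\}$, I would note that conjugation by $\beta$ induces an automorphism of $\Gamma$ whose order divides $p$, and since $|{\rm Aut}(A_{4})|=|{\rm Aut}(S_{4})|=24$ and $|{\rm Aut}(A_{5})|=120$ are all coprime to $p$, this automorphism must be trivial, forcing $\omega_{1}=\omega$ in contradiction to distinctness.

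The main obstacle is the bookkeeping across the maximal-subgroup trichotomy rather than any single computation; in particular, the naive involution count fails for $(S_{4},p=7)$ and $(A_{5},p\in\{7,11,13\})$, which is precisely why the ${\rm Aut}$-order safety net is needed. Everything else reduces to short matrix manipulations that parallel those already performed in the proof of Lemma \ref{lem:I}.
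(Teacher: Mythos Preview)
Your proof is correct and follows the same overall strategy as the paper: dispose of the case $z=0$ by upper-triangularity, then assume $z\neq 0$ and eliminate each type of maximal subgroup from Proposition~\ref{prop:max-subgrp}. The form~(i) argument and the involution count for form~(ii) are essentially the same as the paper's (your dihedral count $(p\pm 1)/2+1$ is in fact more accurate than what is written in the paper).

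The genuine difference lies in form~(iii) for small primes. The paper observes that conjugation by $\beta$ acts freely not merely on the orbit $\{\omega_{1},\ldots,\omega_{p}\}$ but on the \emph{entire} set $I(\Gamma)$ of involutions of $\Gamma$ (because the centralizer of any nontrivial power of $\beta$ in ${\rm PSL}(2,p)$ consists of unipotents, which are never involutions). This yields the divisibility $p\mid\#I(\Gamma)$, which immediately rules out $S_{4}$ and $A_{5}$ by checking that no subgroup has exactly $7$, $11$, or $13$ involutions. You instead use only $\#I(\Gamma)\geq p$ and then appeal to $\gcd(p,|{\rm Aut}(\Gamma)|)=1$ to force the conjugation action to be trivial. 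This also works, but note that your Aut argument is stated for $\Gamma\in\{A_{4},S_{4},A_{5}\}$, whereas a priori $\Gamma$ is only \emph{contained} in such a group; you should add the one-line observation that the bound $\#I(\Gamma)\geq 7$ already forces $\Gamma=S_{4}$ or $\Gamma=A_{5}$, since every proper subgroup of either has at most five involutions. With that clause inserted, your alternative route is complete, and arguably more conceptual than the paper's case check, at the cost of one extra idea.
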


\begin{proof}
Let $\Gamma=\langle\omega_{1},\ldots,\omega_{p}\rangle$ with $\omega_{i}=\beta^{i}\omega\beta^{-i}$.
If $z=0$, then each $\omega_{i}$ is upper-triangular, and so is each element of $\Gamma$, hence $\Gamma\neq{\rm PSL}(2,p)$.

Suppose $z\neq 0$, we shall prove that $\Gamma={\rm PSL}(2,p)$.

Firstly, $\Gamma$ is not contained in the stabilizer of any point of $\mathbb{P}^{1}(\mathbb{F}_{p})$: if $\xi\in\mathbb{F}_{p}^{2}$ is a common eigenvector of $\omega_{1}$ and $\omega_{2}$, then $\xi$ and $\beta\xi$ are both eigenvectors of $\omega_{1}$, hence $\xi$ and $\beta\xi$ are linearly dependent, which is impossible when $z\neq 0$.

Secondly, we show that $\Gamma$ is not contained in any other maximal subgroup, by counting involutions.
For a group $\Delta$, let $I(\Delta)$ denote the set of involutions of $\Delta$.
It is obvious that $\beta^{i}\phi\beta^{-i}\neq\phi$ for any $\phi\in{\rm PSL}(2,p)$ unless $\phi$ is upper-unitriangular, in which case $\phi\notin I(\Gamma)$, so $\langle\beta\rangle$ acts freely on $I(\Gamma)$ by conjugation. Thus
$$(\star) \ \ \ \ \ \ p\mid\#I(\Gamma) \ \ \ \text{and\ \ \ } \#I(\Gamma)\geq p. $$
\begin{itemize}
  \item It is impossible that $\Gamma\leq D_{p-1}$, since $\#I(D_{p-1})=p-1<p$.

        If $\Gamma\leq D_{p+1}$, then $\Gamma=D_{p+1}$ since any $p$ involutions generate $D_{p+1}$; but $\#I(D_{p+1})=p+1$
        is not a multiple of $p$, contradicting ($\star$).
  \item It is impossible that $\Gamma\leq A_{4}$, since $\#I(A_{4})=3<p$.
  \item If $\Gamma\leq S_{4}$, then $\#I(\Gamma)=p=7$ since $\#I(S_{4})=9$, but no subgroup of $S_{4}$ contains exactly 7 involutions.
  \item If $\Gamma\leq A_{5}$, then $\#I(\Gamma)=p\in\{7,11,13\}$ since $\#I(A_{5})=15$, but $A_{5}$ does not have such a subgroup $\Gamma$,
        as can be checked.
\end{itemize}
\end{proof}

Let $m$ be an integer whose residue class modulo $p$ is $-x/z$, then
\begin{align}
\beta^{m}\omega\beta^{-m}=\left(\begin{array}{cc} 0 & -1/z  \\ z & 0 \\ \end{array}\right)=:\varpi(z),
\end{align}
hence $\mathcal{CM}_{\omega}^{\beta}\cong\mathcal{CM}_{\varpi(z)}^{\beta}$. Furthermore, for $z\neq z'$, there does not exist $\tau$ with $\tau\beta\tau^{-1}=\beta$ and $\tau\varpi(z)\tau^{-1}=\varpi(z')$, so $\mathcal{CM}_{\varpi(z)}^{\beta}\ncong\mathcal{CM}_{\varpi(z')}^{\beta}$.

\bigskip

If $n\neq p$, then take $\tau$ as in Eq.(\ref{eq:tau}), so that
$$\varsigma:=\tau\sigma\tau^{-1}=\left(\begin{array}{ll} s & 0 \\ 0 & 1/s \\ \end{array}\right)\in{\rm PGL}(2,p^{2})$$
with $s$ given by Eq.(\ref{eq:s}).
Suppose
\begin{align}
\tau\omega\tau^{-1}=\left(\begin{array}{cc} a & b \\ c & -a \\ \end{array} \right)\in{\rm PSL}(2,p^{2}).  \label{eq:psi2}
\end{align}

\begin{lem}  \label{lem:II2}
The elements $\sigma^{i}\omega\sigma^{-i}, i=1,\ldots,n$ generate ${\rm PSL}(2,p)$ if and only if
$abc\neq 0$ and {\rm(i)} $3a^{2}\neq -1,-2$ and $9a^{4}+9a^{2}+1\neq 0$ if $n=3$, {\rm(ii)} $5a^{4}+5a^{2}+1\neq 0$ if $n=5$.
\end{lem}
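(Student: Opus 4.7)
The plan is to follow the template of Lemma~\ref{lem:I}, now in the setting where every $\psi_i := \varsigma^i(\tau\omega\tau^{-1})\varsigma^{-i}$ is an involution. Writing $\psi_i = \begin{pmatrix} a & s^{2i}b \\ s^{-2i}c & -a \end{pmatrix}$, a direct calculation gives
\begin{equation*}
\psi_i\psi_j = \begin{pmatrix} a^2 + s^{2(i-j)}bc & ab(s^{2j} - s^{2i}) \\ ac(s^{-2i} - s^{-2j}) & a^2 + s^{2(j-i)}bc \end{pmatrix}, \qquad \mathrm{tr}(\psi_i\psi_j) = 2a^2 - (1+a^2)T_{i-j},
\end{equation*}
where $T_k := s^{2k} + s^{-2k}$. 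For $n=3$, $s^2$ is a primitive cube root of unity, so $T_1 = T_2 = -1$ and every off-diagonal trace equals $3a^2+1$; for $n=5$, $s^2$ is a primitive fifth root of unity, $T_1, T_2$ are the two roots of $T^2+T-1=0$, and the two distinct trace values $t_1, t_2$ of $\psi_i\psi_j$ satisfy $t_1 + t_2 = 5a^2+1$ and $t_1 t_2 = 5a^4 - 1$.

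The necessity of $abc \neq 0$ is verbatim from Lemma~\ref{lem:I}: $a=0$ makes each $\psi_i$ anti-diagonal (so $\Gamma$ lies in the normalizer of a torus), and $bc = 0$ makes each $\psi_i$ triangular (so $\Gamma$ lies in a Borel). Assuming $abc \neq 0$, each remaining maximal subgroup is excluded via Proposition~\ref{prop:order} applied to $\psi_i\psi_j$: orders $2, 3, 5$ correspond to $\mathrm{tr}=0$, $\mathrm{tr}^2=1$, $(\mathrm{tr}^2-1)^2 = \mathrm{tr}^2$, respectively. For $n=3$ and $\mathrm{tr} = 3a^2+1$ this yields $3a^2 = -1$ (which forces $\psi_1, \psi_2$ to commute, placing $\Gamma$ in a copy of $A_4$), $3a^2 = -2$ (which by Proposition~\ref{prop:A5}(b) places $\Gamma$ in $S_4$), and the octic equation $(9a^4+3a^2-1)(9a^4+9a^2+1) = 0$. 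For $n=5$, combining order constraints on $t_1$ and $t_2$ through their symmetric functions yields the single polynomial $5a^4+5a^2+1$; the subgroups $A_4$ and $S_4$ are automatically excluded because their automorphism groups have no element of order $5$. The sufficiency for the point-stabilizer case is identical to Lemma~\ref{lem:I} (any common eigenvector of $\psi_1, \psi_2$ must be $\varsigma$-fixed, forcing $\psi_1$ diagonal), and the dihedral case is ruled out because all $\psi_i\psi_j$ would have to commute, contradicting the non-commutation visible in the matrix form above.

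The main obstacle is explaining why only the factor $9a^4+9a^2+1$ (and not $9a^4+3a^2-1$) must be excluded for $n=3$. The plan is to leverage the explicit $A_5$-embedding from Theorem~\ref{thm:II-RBCM-A5}: computing in $A_5 = \mathrm{PSL}(2,5)$, the products $\omega_i\omega_j$ of the standard generators all belong to a single $A_5$-conjugacy class of 5-cycles, which (via Proposition~\ref{prop:order}) corresponds to $t^2+t-1=0$, i.e., $9a^4+9a^2+1=0$; the other $A_5$-class of 5-cycles, corresponding to $9a^4+3a^2-1=0$, is incompatible with any $\varsigma$-equivariant configuration of three involutions in $A_5$. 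An analogous analysis for $n=5$ shows $5a^4+5a^2+1 = 0$ produces $\{t_1, t_2\} = \{-1, 5a^2+2\}$, matching the mixed order-3/order-5 pattern realized in Section~\ref{sec:A5-2}, whereas the companion polynomial $5a^4-5a^2-1$ would yield $\{t_1, t_2\} = \{1, 5a^2\}$ with $5a^2$ having no admissible order in $A_5$, and hence no $A_5$-subgroup arises.
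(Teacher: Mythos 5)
Your setup (the matrix forms, the trace formula $\mathrm{tr}(\psi_i\psi_j)=2a^2-(1+a^2)T_{i-j}$ with $T_1=-1$ for $n=3$ and $T_1,T_2$ roots of $T^2+T-1$ for $n=5$, and the necessity of $abc\neq0$) is correct and matches the paper's computations. But the core of your argument --- deciding which subgroup containments actually occur by examining only the orders, or conjugacy classes, of the pairwise products $\psi_i\psi_j$ --- has a genuine gap, and it sits exactly at the point you yourself flag as ``the main obstacle.'' First, the proposed dichotomy ``one $A_5$-class of $5$-cycles corresponds to $t^2+t-1=0$, the other to $t^2-t-1=0$'' is false: in ${\rm PSL}(2,p)$ the trace of an element is only defined up to sign, so each class of order-$5$ elements contains lifts whose traces are roots of \emph{both} quadratics; the classes are separated by the value of $t^2$ (the two roots of $T^2-3T+1$), and each quartic factor $9a^4+9a^2+1$, $9a^4+3a^2-1$ is compatible with either value of $t^2$. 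Second, and more fundamentally, knowing that every $\psi_i\psi_j$ has an $A_5$-admissible order does not imply $\Gamma\leq A_5$. A concrete counterexample to your $n=5$ analysis: for $p=11$ one can have $5a^4-5a^2-1=0$ (your ``companion'' case $t_1=1$), and then $t_2=5a^2$ satisfies $(t_2^2-1)^2=t_2^2$, so all products have orders $3$ and $5$; yet $5a^4+5a^2+1\neq 0$ there and the lemma asserts $\Gamma={\rm PSL}(2,11)$. The same objection applies to your claim that $3a^2=-2$ (all products of order $3$) ``places $\Gamma$ in $S_4$'': three involutions with pairwise products of order $3$ generate an infinite Coxeter group generically, so this alone bounds nothing.

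The paper's proof supplies precisely the missing ingredient: when $\Gamma\cong S_4$ or $A_5$, the automorphism $\psi_i\mapsto\psi_{i+1}$ is inner (using that the $\psi_i$ have no common eigenvector), which forces $\varsigma\in\Gamma$; then the classification of II-RBCM's on $A_5$ (Theorem \ref{thm:II-RBCM-A5}) pins down the configuration $(\varsigma;\psi_1,\dots,\psi_n)$ up to isomorphism, so that the \emph{single} element $\psi_n\varsigma$ (or $\psi_n\varsigma^2$) has a prescribed order, and its well-defined squared trace $(s-s^{-1})^2a^2=-3a^2$ (resp.\ $(s^2-s^{-2})^2a^2$) yields one quartic, not a product of two. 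The same element gives the converse direction via Proposition \ref{prop:A5}(b),(c): e.g.\ $3a^2=-2$ makes $|\psi_3\varsigma|=4$, so $\langle\psi_3,\varsigma\rangle$ is a quotient of $S_4$ and $\#\Gamma\leq 24$; and $9a^4+9a^2+1=0$ makes $|\psi_3\varsigma|=5$, so $\langle\psi_3,\varsigma\rangle=A_5\supseteq\Gamma$. Your proposal omits these sufficiency arguments entirely. To repair your write-up you would need to import the ``$\varsigma\in\Gamma$'' step and the appeal to Theorem \ref{thm:II-RBCM-A5}; without them the two quartic factors cannot be separated.
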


\begin{proof}
Let $\Gamma=\langle\psi_{1},\ldots,\psi_{n}\rangle$ with $\psi_{i}=\varsigma^{i}(\tau\omega\tau^{-1})\varsigma^{-i}$.
We have
\begin{align}
\psi_{i}&=\left(\begin{array}{cc} a & s^{2i}b \\ s^{-2i}c & -a \\ \end{array} \right), \ \ \ \
\psi_{i}\psi_{j}=\left(\begin{array}{cc} a^{2}+s^{2(i-j)}bc & (s^{2j}-s^{2i})ab \\ (s^{-2i}-s^{-2j})ac & a^{2}+s^{2(j-i)}bc \\ \end{array} \right).
\end{align}

It can be verified that $\psi_{i}, 1\leq i\leq n$ have a common eigenvector if and only if $abc=0$, so a necessary condition for $\tau^{-1}\Gamma\tau={\rm PSL}(2,p)$ is $abc\neq 0$ which we assume below.

(a) If $\Gamma\leq D_{2m}$ for some $m$, then $\psi_{n}\psi_{1}\psi_{2}$ is an involution, hence
        $$0={\rm tr}(\psi_{n}\psi_{1}\psi_{2})=(2(s^{-2}-s^{2})+s^{4}-s^{-4})abc.$$
    But this is impossible since $s^{2}\neq \pm 1$. In particular, $\Gamma\not\leq D_{p\pm 1}$.

(b) If $\Gamma\leq A_{4}$, then since $A_{4}$ has exactly 3 involutions which commute with each other, we have $n=3$ and $\psi_{3}\psi_{1}=\psi_{2}$, hence $s^{4}+s^{2}+1=0$ and
$$\left(\begin{array}{cc} a^{2}+s^{4}bc & (s^{2}-1)ab \\ (1-s^{4})ac & a^{2}+s^{2}bc \\ \end{array}\right)=
\left(\begin{array}{cc} a & s^{4}b \\ s^{2}c & -a \\ \end{array}\right)\in{\rm PSL}(2,p^{2}).$$
This is equivalent to $3a^{2}=-1$, as can be verified.

(c) If $\Gamma\leq S_{4}$ but $\Gamma\nleq A_{4}$, then $\Gamma=S_{4}$ since any other subgroup of $S_{4}$ is dihedral, which cannot contain $\Gamma$ by (a). Recall that each automorphism of $S_{4}$ is the conjugation by some $\eta\in S_{4}$. Suppose the element of $\Gamma$ corresponding to $\eta$ is
$\vartheta$, then for each $i$,
$\vartheta\psi_{i}\vartheta^{-1}=\psi_{i+1}=\varsigma\psi_{i}\varsigma^{-1}$, hence $\varsigma^{-1}\vartheta$ commutes with $\psi_{i}$; this implies that $\vartheta=\varsigma$ since the $\psi_{i}$'s have no common eigenvector. Thus $\varsigma\in\Gamma$.

Below, we write elements of $\Gamma$ and also $\varsigma$ as matrices and permutations simultaneously.
Clearly each $\psi_{i}$ is a transposition; just assume $\psi_{n}=(12)$. There are two possibilities.
\begin{enumerate}
  \item[\rm(i)] If $n=3$, then $\varsigma$ does not fix the letter $4$, hence we may assume $\varsigma=(432)$,
           and then $\psi_{3}\varsigma=(1432)$ has order 4, hence
           $$2=({\rm tr}(\psi_{6}\varsigma))^{2}=(s-s^{-1})^{2}a^{2}=-3a^{2}.$$

           On the other hand, when $n=3$ and $3a^{2}=-2$, one has $|\psi_{3}\varsigma|=4$, so $\langle\psi_{1},\varsigma\rangle$ is a quotient of $S_{4}$. Thus $\#\Gamma\leq 24$.

  \item[\rm(ii)] If $n=4$, then we may assume $\varsigma=(4321)$, so $\psi_{2}=(34)$ commutes with $\psi_{4}$. This implies $(s^{4}-1)ab=0$ which is absurd.
\end{enumerate}

(d) If $\Gamma\leq A_{5}$ and $\Gamma\nleq A_{4}$, then $\Gamma=A_{5}$ by (a) and Proposition \ref{prop:A5} (d).
Arguing similarly as in (c), we can show $\varsigma\in\Gamma$.
By Theorem \ref{thm:II-RBCM-A5}, (up to isomorphism) there are three possibilities.
\begin{enumerate}
  \item[\rm(i)] $n=3$ (so that $s^{4}+s^{2}+1=0$), $\varsigma=(123)$, $\psi_{3}=(24)(35)$. Then $\psi_{3}\varsigma=(12435)$,
        hence
        $$((s-s^{-1})^{2}a^{2}-1)^{2}=(s-s^{-1})^{2}a^{2},$$
        implying $9a^{4}+9a^{2}+1=0$.

        Conversely, if $n=3$ and $9a^{4}+9a^{2}+1=0$, then $|\psi_{3}\varsigma|=5$, hence $\langle\psi_{3},\varsigma\rangle=A_{5}$, and
        $\Gamma\leq A_{5}$.

  \item[\rm(ii)] $n=5$, (so that $s^{4}+s^{2}+s^{-2}+s^{-4}=-1$), $\varsigma=(12345)$, $\psi_{5}=(12)(34)$. Then $\psi_{5}\varsigma=(135)$, hence
        \begin{align}
        1=({\rm tr}(\psi_{5}\varsigma))^{2}=(s-s^{-1})^{2}a^{2},  \label{eq:a1}
        \end{align}

        Conversely, if $n=5$ and Eq.(\ref{eq:a1}) holds, then $|\psi_{5}\varsigma|=3$, $\langle\psi_{5},\varsigma\rangle=\langle\psi_{5},\psi_{5}\varsigma\rangle=A_{5}$, and $\Gamma\leq A_{5}$.

  \item[\rm(iii)] $n=5$, $\varsigma=(12345)$, $\psi_{5}=(13)(24)$. Then $\psi_{5}\varsigma^{2}=(152)$, hence
        \begin{align}
        1=({\rm tr}(\psi_{5}\varsigma^{2}))^{2}=(s^{2}-s^{-2})^{2}a^{2}, \label{eq:a2}
        \end{align}

        Conversely, when $n=5$ and Eq.(\ref{eq:a2}) holds, then $|\psi_{5}\varsigma|=3$, $\langle\psi_{5},\varsigma\rangle=\langle\psi_{5},\psi_{5}\varsigma^{2}\rangle=A_{5}$, and $\Gamma\leq A_{5}$.
\end{enumerate}
Note that $(s-s^{-1})^{2}+(s^{2}-s^{-2})^{2}=-5$ and $(s-s^{-1})^{2}\cdot(s^{2}-s^{-2})^{2}=5$, hence Eq.(\ref{eq:a1}) or Eq.(\ref{eq:a2}) holds if and only if
$1/a^{2}$ is a root of $t^{2}+5t+5=0$.
\end{proof}

If $\sigma=\alpha^{k}$, then $n\mid p-1$ and $k=(p-1)u/n$ for some $u$ with $(u,n)=1$, $1\leq u<n/2$.
Now
$\varphi:=\left(\begin{array}{cc} b^{-1} & 0 \\ 0 & 1 \\ \end{array}\right)$ commutes with $\alpha^{k}$ and
\begin{align}
\varphi\omega\varphi^{-1}=\left(\begin{array}{cc} a & 1 \\ -a^{2}-1 & -a \\ \end{array}\right)=:\omega({\rm ii},a),
\end{align}
hence $\mathcal{CM}_{\omega}^{\sigma}\cong\mathcal{CM}_{\omega({\rm ii},a)}^{\alpha^{k}}$.
Furthermore, for $a\neq a'$, there does not exist $\vartheta$ with $\vartheta\alpha^{k}\vartheta^{-1}=\alpha^{k}$ and
$\vartheta\omega({\rm ii},a)\vartheta^{-1}=\omega({\rm ii},a')$, so
$\mathcal{CM}_{\omega({\rm ii},a)}^{\alpha^{k}}\ncong\mathcal{CM}_{\omega({\rm ii},a')}^{\alpha^{k}}$.

\medskip

If $\sigma=\gamma^{\ell}$, then $n\mid p+1$ and $\ell=(p+1)v/n$ for some $v$ with $(v,n)=1$, $1\leq v<n/2$.
One deduces from Eq.(\ref{eq:tau}), (\ref{eq:omega}) and (\ref{eq:psi2}) that
\begin{align}
y+ez=-2\check{a}, \hspace{20mm} \text{with\ \ } \check{a}=a{\sqrt{e}}\in\mathbb{F}_{p}, \end{align}
and then from $x^{2}+yz=-1$ that
\begin{align}
(y+\check{a})^{2}-ex^{2}=\check{a}^{2}+e.   \label{eq:x-z2}
\end{align}
The condition $bc\neq 0$ is equivalent to $\check{a}^{2}+e\neq 0$. When $\check{a}$ is fixed, the equation (\ref{eq:x-z2}) has $p+1$ solutions

Choose a solution $(x_{{\rm ii},\check{a}},y_{{\rm ii},\check{a}})$, and let
\begin{align}
\tilde{\omega}({\rm ii},\check{a})=\left(\begin{array}{cc} x_{{\rm ii},\check{a}} & y_{{\rm ii},\check{a}} \\ -(2\check{a}+y_{{\rm ii},\check{a}})/e & -x_{{\rm ii},\check{a}} \\ \end{array}\right).
\end{align}
Arguing similarly as in Section 3, we can show that
$\mathcal{CM}_{\omega}^{\gamma^{\ell}}\cong\mathcal{CM}_{\tilde{\omega}({\rm ii},\check{a})}^{\gamma^{\ell}}$.

\begin{thm}
Suppose $\mathcal{M}$ is an $n$-valent II-RBCM on ${\rm PSL}(2,p)$ with $p>5$, then $n\mid p(p^{2}-1)$.
\begin{enumerate}
  \item[\rm(i)] If $n=p$, then $\mathcal{M}\cong\mathcal{CM}_{\varpi(z)}^{\beta}$ for a unique $z\neq 0$.
  \item[\rm(ii)] If $n\mid p-1$, then $\mathcal{M}\cong\mathcal{CM}_{\omega({\rm ii},a)}^{\alpha^{(p-1)u/n}}$ for a unique pair $(a,u)$ such that
    \begin{itemize}
      \item $a\neq 0$ and $a^{2}\neq -1$;
      \item $(u,n)=1$ and $1\leq u<n/2$;
      \item $3a^{2}\neq -1,-2$ and $9a^{4}+9a^{2}+1\neq 0$ if $n=3$;
      \item $5a^{4}+5a^{2}+1\neq 0$ if $n=5$.
    \end{itemize}
  \item[\rm(iii)] If $n\mid p+1$, then $\mathcal{M}\cong\mathcal{CM}_{\tilde{\omega}({\rm ii},\check{a})}^{\gamma^{(p+1)v/n}}$ for a unique
       pair $(\check{a},v)$ such that
    \begin{itemize}
      \item $\check{a}\neq 0$ and $\check{a}^{2}\neq -e$;
      \item $(v,n)=1$ and $1\leq v<n/2$;
      \item $3\check{a}^{2}\neq -e,-2e$ and $9\check{a}^{4}+9\check{a}^{2}e+e^{2}\neq 0$ if $n=3$;
      \item $5\check{a}^{4}+5\check{a}e+e^{2}\neq 0$ if $n=5$. 
    \end{itemize}
\end{enumerate}
\end{thm}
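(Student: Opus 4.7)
The plan is to synthesize three ingredients already in the paper: the ${\rm PGL}(2,p)$-conjugacy classification of Section~2, Lemmas~\ref{lem:II1} and~\ref{lem:II2}, and the explicit reductions of $\omega$ to standard form displayed just before the theorem. First, divisibility: every element of ${\rm PGL}(2,p)$ is conjugate to one of $\alpha^k$, $\beta$, or $\gamma^\ell$, with orders respectively dividing $p-1$, equal to $p$, or dividing $p+1$. Since isomorphism of RBCM's only needs the pair $(\sigma,\omega)$ up to simultaneous ${\rm PGL}(2,p)$-conjugation, we may assume $\sigma$ is of one of these standard shapes, which both partitions the argument into the three cases of the theorem and yields $n \mid p(p^2-1)$.

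For case (i), $\sigma = \beta$: Lemma~\ref{lem:II1} gives the generation criterion $z \neq 0$, and the identity $\beta^m \omega \beta^{-m} = \varpi(z)$ for $m \equiv -x/z \pmod p$ reduces $\omega$ to the stated normal form. For uniqueness I would check that the centralizer of $\beta$ in ${\rm PGL}(2,p)$ is just $\langle \beta \rangle$ (the $(2,1)$-entry of any centralizing matrix is forced to be $0$, the diagonal entries forced to agree), and then compute that $\beta^m \varpi(z) \beta^{-m}$ returns to $\varpi$-shape only when $m \equiv 0 \pmod p$, with $z$ unchanged.

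For cases (ii) and (iii), I would take $\sigma$ equal to $\alpha^k$ or $\gamma^\ell$ and write $k = (p-1)u/n$ (resp.\ $\ell = (p+1)v/n$). Coprimality $(u,n) = 1$ (resp.\ $(v,n) = 1$) follows from $|\sigma| = n$, and the range $1 \le u < n/2$ (resp.\ $1 \le v < n/2$) encodes the ${\rm PGL}(2,p)$-conjugacy $\alpha^k \sim \alpha^{-k}$ (resp.\ $\gamma^\ell \sim \gamma^{-\ell}$); the endpoint $u = n/2$ is automatically excluded because $\gcd(u,n) = 1$ and $n > 2$. The reductions $\omega \mapsto \omega({\rm ii}, a)$ and $\omega \mapsto \tilde\omega({\rm ii}, \check a)$ stated before the theorem put $\omega$ into normal form, and Lemma~\ref{lem:II2} translates into the listed non-degeneracy conditions on $a$. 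In case (iii), substituting $\check a = a\sqrt e$ converts the conditions on $a$ into those on $\check a$: for example $3a^2 = -1 \Leftrightarrow 3\check a^2 = -e$, and $9a^4 + 9a^2 + 1 = e^{-2}(9\check a^4 + 9\check a^2 e + e^2)$, and similarly for the $n=5$ obstruction.

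The main obstacle is the uniqueness of the pair $(a,u)$ (resp.\ $(\check a, v)$). If two pairs produce isomorphic RBCM's, there is $\vartheta \in {\rm PGL}(2,p)$ with $\vartheta \sigma \vartheta^{-1} = \sigma'$ and $\vartheta \omega \vartheta^{-1} = \omega'$ as elements of ${\rm PSL}(2,p)$. The first equation, combined with the restriction $1 \le u, u' < n/2$, forces $u = u'$ and then $\vartheta$ into the centralizer of $\sigma$ — a maximal torus in ${\rm PGL}(2,p)$. What remains is to inspect the action of this torus on the standard involutions and check that the only torus element sending $\omega({\rm ii}, a)$ to another $\omega({\rm ii}, a')$ in ${\rm PSL}(2,p)$ leaves the parameter unchanged. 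This last step is the delicate computation of the proof, as one must keep careful track of the $\pm 1$ ambiguity inherent in passing from ${\rm SL}(2,p)$ to ${\rm PSL}(2,p)$; this is the case~(ii)/(iii) analogue of the centralizer calculation carried out in case~(i).
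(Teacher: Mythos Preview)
Your proposal is correct and follows the paper's approach exactly. The paper does not give a separate proof of this theorem; it is stated as a summary immediately after Lemmas~\ref{lem:II1} and~\ref{lem:II2} and the explicit normal-form reductions of $\omega$, and your synthesis of those three ingredients is precisely what the paper intends. Your treatment of uniqueness is in fact slightly more explicit than the paper's: you spell out that an isomorphism forces $\tau\sigma\tau^{-1}=\sigma'$ (hence $u=u'$ once the range is restricted) before reducing to a centralizer computation, whereas the paper only records the centralizer step (``there does not exist $\vartheta$ with $\vartheta\alpha^{k}\vartheta^{-1}=\alpha^{k}$ and $\vartheta\omega({\rm ii},a)\vartheta^{-1}=\omega({\rm ii},a')$'') and leaves the rest implicit.
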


\vspace{1cm}

{\bf Acknowledgements:}

I shall express my gratitude to Fangli Zhang for helpful suggestions on simplifying the proves of Lemma 3.2 and 4.3.

\end{document}